\newtheorem{theorem}{Theorem}
\newtheorem{proposition}{Proposition}
\newtheorem{corollary}{Corollary}
\theoremstyle{definition}
\theoremstyle{remark}
\newtheorem{remark}{Remark}
\DeclareMathOperator{\Exp}{\mathtt{exponential}}
\DeclareMathOperator{\Gam}{\mathtt{gamma}}
\DeclareMathOperator{\Var}{Var}
\DeclareMathOperator{\Arcsin}{\mathtt{arcsin}}
\DeclareMathOperator{\uniform}{\mathtt{uniform}}
\providecommand{\Exc}[2]{\mathbb{E}\left[#1\,\middle|\, #2\right]}
\providecommand{\card}[1]{\mathtt{\#}#1}
\newcommand{\Ex}{\mathbb{E}}
\providecommand{\Exc}[2]{\mathbb E\left[\right]#1\middle| #2\right]}
\providecommand{\abs}[1]{\lvert#1\rvert}
\newcommand{\conv}{\mathop{\scalebox{1.5}{\raisebox{-0.2ex}{$\ast$}}}} % convolution symbol
\providecommand{\Prob}[1]{\mathbb{P}\left\{#1\right\}}
\providecommand{\keywords}[1]
{
	\small	
	\textbf{\textit{Keywords---}} #1
}
\title{A Poisson representation of the positive sojourn time\\ of L\'evy processes}
\author{Helmut H.~Pitters }
\affil{Mathematics Institute, University of Mannheim}
\begin{document}

\maketitle

%{\tt Draft version. Do not distribute.}

\begin{abstract}
   We study the distribution of the positive sojourn time $$ A_t\coloneqq \int_0^t \mathbf 1\{ X_s>0  \}ds $$
of an arbitrary L\'evy process $X\coloneqq (X_t)_{t\geq 0}$. For an exponential random variable $E^{(q)}$ of rate $q>0$ independent of $X$ we show the representation in law
\begin{align*}
  A_{E^{(q)}} =_d \sum_{T\in \Pi} T
\end{align*}
as the sum of points of a Poisson process $\Pi$ with intensity given explicitely in terms of the positivity $t\mapsto \Prob{X_t>0}$. This representation raises some fundamental questions, not least because $\Pi$ turns out to be intimately connected to the celebrated Poisson-Dirichlet distribution. Moreover, we characterise $A_t$ by working out its double Laplace transform, and thus complement a recent result in which the distribution of $A_t$ was characterised via its higher moments. As a Corollary of the Poisson representation, in the special cases where $X$ is Brownian motion, a symmetric stable process, a L\'evy process with constant positivity, we obtain an extension and new derivation of classical (generalised) arcsine laws going back to L\'evy (1939), Kac (1951), and Getoor and Sharpe (1994), respectively. Even in these cases the Poisson representation is new.

 As an application, if $X$ is the $(1/2)$-stable subordinator with drift, we obtain both the Laplace transform of $A_t$ and the density of its distribution. This is the second example of a L\'evy process whose occupation time distribution is known explicitely but is not generalized arcsine, the first example being Brownian motion with drift that was studied earlier in the context of option prizing.
\end{abstract}

% MSC classification: 60G51 (primary); 60G05, 60G55 (secondary)

\keywords{occupation time; L\'evy process; Spitzer's combinatorial Lemma; Bell polynomials; Poisson-Dirichlet distribution; $(1/2)$-stable subordinator}

\section{Introduction and broader context}\label{sec:introduction}

Let $X\coloneqq (X_t)_{t\geq 0}$ denote a real-valued stochastic process, and consider the time
\begin{align}\label{def:sojourn_time}
  A_t &\coloneqq \int_0^t \mathbf 1\{   X_s>0 \} ds; \qquad t\geq 0,
\end{align}
that $X$ spends above zero up until time $t$. The functional $A_t$ is also referred to as the positive sojourn time of $X,$ or the occupation time of $(0, \infty)$ by $X$.

\emph{Second arcsine law for Brownian motion}. The distribution of
\begin{align}\label{def:arcsine}
  \cos^2(\Theta)\qquad \text{ with }\Theta\sim\uniform(0, 2\pi)
\end{align}
is called the (standard) arcsine distribution. It is supported on the unit interval $(0, 1)$ and has density
\begin{align}\label{eq:arcsine_density}
  x\mapsto \frac{1}{\pi\sqrt{x(1-x)}}\mathbf 1_{(0, 1)}(x).
\end{align}
It bears its name due to its cumulative distribution function $\Prob{A_t/t\leq x}=2\arcsin(\sqrt x)/\pi$ for $0\leq x\leq 1$.

 Brownian motion was the first continuous-time stochastic process for which the distribution of its positive sojourn time~\eqref{def:sojourn_time} was identified, namely by Paul L\'evy~\cite{Levy1939} in 1939 who showed the following.

\begin{theorem}[Levy's second arcsine law for Brownian motion (1939), cf.~\cite{Levy1939}]\label{thm:levy_arcsine}
  Let $X$ denote standard Brownian motion started in zero with positive sojourn time $A_t$ as defined in~\eqref{def:sojourn_time}. Then the proportion $A_t/t$ of time that $X$ spends above zero up until time $t>0$ is arcsine distributed.
\end{theorem}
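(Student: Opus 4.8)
\emph{Proof idea.} The plan is to compute a double Laplace transform of $A_t$ via the Feynman--Kac formula and to recognise the answer as the Laplace (Stieltjes) transform of the arcsine law.

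First I would use Brownian scaling: since $(B_{ct})_{t\ge 0}=_d(\sqrt{c}\,B_t)_{t\ge 0}$, a change of variable in~\eqref{def:sojourn_time} gives $A_{ct}=_d c\,A_t$ for every $c>0$, so the law of $A_t/t$ does not depend on $t>0$ and it suffices to identify the law of $A_1$. Fix $q,\lambda>0$, let $E^{(q)}$ be an independent exponential variable of rate $q$, and put $\phi(x)\coloneqq\Ex_x\!\left[e^{-\lambda A_{E^{(q)}}}\right]$ for Brownian motion started at $x\in\mathbb{R}$. Unfolding the exponential time and using the scaling identity inside the resulting integral,
\begin{align*}
  \phi(0)=q\int_0^\infty e^{-qt}\,\Ex\!\left[e^{-\lambda A_t}\right]dt
         =q\int_0^\infty e^{-qt}\,\Ex\!\left[e^{-\lambda t A_1}\right]dt
         =\Ex\!\left[\frac{q}{q+\lambda A_1}\right],
\end{align*}
which is the Stieltjes transform of the law of $A_1$; since this transform determines the law, it is enough to evaluate $\phi(0)$.

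Next I would invoke Feynman--Kac: $\phi$ is the bounded solution of
\begin{align*}
  \tfrac12\,\phi''(x)=\bigl(q+\lambda\,\indicator{x>0}\bigr)\,\phi(x)-q,\qquad x\in\mathbb{R},
\end{align*}
a constant-coefficient linear ODE on each of the half-lines $(0,\infty)$ and $(-\infty,0)$. Its bounded solutions are $\phi(x)=\frac{q}{q+\lambda}+a\,e^{-\sqrt{2(q+\lambda)}\,x}$ for $x>0$ and $\phi(x)=1+b\,e^{\sqrt{2q}\,x}$ for $x<0$; matching $\phi$ and $\phi'$ at $0$ yields two linear equations for $a,b$ whose solution simplifies, after a short computation, to $\phi(0)=\sqrt{q/(q+\lambda)}$. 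Finally, the substitution $x=\sin^2\theta$ shows that the arcsine density~\eqref{eq:arcsine_density} has exactly this transform, $\int_0^1\frac{q}{q+\lambda x}\,\frac{dx}{\pi\sqrt{x(1-x)}}=\frac{2q}{\pi}\int_0^{\pi/2}\frac{d\theta}{q+\lambda\sin^2\theta}=\sqrt{q/(q+\lambda)}$, so by uniqueness of the Stieltjes transform $A_1$ --- hence $A_t/t$ for every $t>0$ --- is arcsine distributed.

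The routine ODE algebra is not the real difficulty; the delicate point is justifying the Feynman--Kac representation, i.e.\ that $\phi$ is bounded, is of class $C^1$ across the boundary $x=0$ (where $\phi''$ has a jump), and genuinely solves the displayed ODE rather than a merely distributional version of it. I would handle this by conditioning on the first hitting time of $0$ and applying the strong Markov property, or, equivalently, by a resolvent computation that splits the Brownian path at its excursions away from $0$. That same excursion decomposition gives the alternative proof that fits the theme of this paper: $A_t$ is the total length of those positive excursions of $B$ that are completed before the independent exponential time, a Poisson functional of It\^o's excursion measure whose Laplace transform can be computed directly.
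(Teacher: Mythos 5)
Your argument is correct, and it is a genuinely different route from the one the paper takes. The paper treats Theorem~\ref{thm:levy_arcsine} as a special case of its general machinery: uniform sampling of the occupation time plus Spitzer's combinatorial lemma yield the double Laplace transform $G(q,\lambda)=q^{-1}\exp(-\int_0^\infty e^{-qt}t^{-1}\Prob{X_t>0}(1-e^{-\lambda t})dt)$ of Proposition~\ref{prop:ot_levy_laplace-transform}, and constant positivity $\Prob{X_t>0}=1/2$ together with the Frullani integral gives $\Ex[e^{-\lambda A_{E^{(q)}}}]=(1+\lambda/q)^{-1/2}$, whence $A_{E^{(q)}}\sim\Gam(q,1/2)$ and the arcsine law via the beta/gamma algebra of Corollary~\ref{cor:main}. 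You compute exactly the same quantity $\phi(0)=\Ex[e^{-\lambda A_{E^{(q)}}}]=\sqrt{q/(q+\lambda)}$, but by solving the resolvent ODE $\tfrac12\phi''=(q+\lambda\mathbf 1_{\{x>0\}})\phi-q$ coming from Feynman--Kac, and you then invert by a different device: Brownian scaling converts the double Laplace transform into the Stieltjes transform $\Ex[q/(q+\lambda A_1)]$, which you match against the arcsine density directly. Your ODE algebra and the trigonometric integral both check out, and your use of scaling to reduce to $t=1$ is the same remark the paper makes after Corollary~\ref{cor:main}. What each approach buys: yours is self-contained and elementary for Brownian motion, needing only the generator and the strong Markov property (and your Stieltjes-transform reduction is a clean alternative to gamma-subordination, though it is tied to self-similarity); the paper's sampling/Spitzer route never touches the generator, uses only the positivity function $t\mapsto\Prob{X_t>0}$, and therefore extends verbatim to arbitrary L\'evy processes, which is the whole point of Theorem~\ref{thm:main}. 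The one place where your write-up is still a sketch is the justification of the Feynman--Kac representation across the discontinuity of the potential at $0$; your proposed fix (splitting at the first hitting time of $0$ and using the memorylessness of $E^{(q)}$, for which $\Ex_x[e^{-\theta\tau_0}]=e^{-|x|\sqrt{2\theta}}$ is explicit) is the standard and correct way to make it rigorous, so this is a matter of writing it out rather than a gap in the idea.
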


  Levy's second arcsine law for Brownian motion, Theorem~\ref{thm:levy_arcsine}, has served as a fruitful seedling for various further investigations into the path behaviour of Brownian motion and other stochastic processes and may be regarded as a cornerstone result in probability. For instance, L\'evy also showed that for any $s, t>0$
\begin{align*}
  \frac{A_t}{t} =_d \frac{A_{T_s}}{T_s},
\end{align*}
where $(T_t)_{t\geq 0}$ is the inverse of the continuous local time process $(L_t)_{t\geq 0}$ of Brownian motion at zero defined by $T_s\coloneqq\inf\{ t\geq 0\colon L_t>s  \}$. Put differently, regardless of whether Brownian motion is stopped at some deterministic time $t>0$ where $B_t\neq 0$ a.s.,  or at the random time $T_s$ where $B_{T_s}=0$ a.s., the proportion of time that Brownian motion spends positive is arcsine distributed in both cases. This curious observation inspired deeper investigations in the course of which Jim Pitman and Marc Yor~\cite{PitmanYor1992} found that the lengths of excursions of Brownian motion away from zero are Poisson-Dirichlet distributed when put in non-increasing order. In fact, they derived similar identities for any process whose zero set is the range of a stable subordinator, e.g.~a Bessel process of dimension $d\in (0, 2)$.

L\'evy derived his second arcsine law by an argument involving $(T_t)$ and the fact that this is a stable subordinator with index $1/2$, cf.~\cite{PitmanYor1992}. L\'evy's second arcsine law may also be derived by viewing Brownian motion as the diffusion limit of suitably chosen random walks as justified by Monroe Donsker's invariance principle, cf.~\cite[Proof of Theorem 5.28, p.~139]{MoertersPeres2010}. Yet another derivation may be obtained by the Feynman-Kac formula, cf.~\cite[first application of Theorem 7.43]{MoertersPeres2010}. A recent elementary derivation may be found in~\cite{Pitters2025}. Here is the sketch of a beautiful argument by Pitman and Yor~\cite[Section 4]{PitmanYor2007} employing tools from excursion theory. Denote by
\begin{align*}
  A_t^- \coloneqq \int_0^t \mathbf 1\{ X_s<0  \}ds;\qquad t\geq 0,
\end{align*}
the negative sojourn time of $X$ up until time $t$. Then $(A_{T_t})_{t\geq 0}$ and $(A_{T_t}^-)_{t\geq 0}$ are i.i.d.~(because of the symmetry of $X$, and since positive/negative excursions of Brownian motion are independent); by Kiyosi It\^o's excursion theory they are $(1/2)$-stable subordinators, so
\begin{align*}
  \frac{1}{t}(A_{T_t}, A_{T_t}^-)=_d \left ( \frac{1}{4N^2}, \frac{1}{4N'^2}  \right );\qquad t>0,
\end{align*}
where $N, N'$ are i.i.d.~standard Gaussian. We have
\begin{align}\label{eq:arcsine_law_excursions}
  \frac{A_t}{t} =_d \frac{1/N^2}{1/N^2+1/N'^2} = \frac{N'^2}{N'^2+N^2} =_d \cos^2\Theta;\qquad t>0,
\end{align}
and by~\eqref{def:arcsine} we conclude Levy's second arcsine law for Brownian motion. Mind the ratio of squares of independent Gaussians in~\eqref{eq:arcsine_law_excursions} as we will encounter it in more general form in the representation of the positive sojourn time of L\'evy processes with constant positivity in equation~\eqref{eq:poisson_ratio} of our main result, Theorem~\ref{thm:main}.

\emph{Occupation times of other stochastic processes.} L\'evy~\cite{Levy1939} also showed that if $X$ denotes a standard Brownian bridge, then $A_1\sim\uniform(0, 1)$. Since the publication of L\'evy's second arcsine law for Brownian motion a vast body of literature has been devoted to studying occupation times of other stochastic processes. Let us turn to some examples. Brownian motion with non-zero drift $\mu\in\mathbb R\setminus\{ 0\}$ and its occupation time has been studied in the context of option pricing in mathematical finance, cf.~\cite{Akahori1995, EmbrechtsRogersYor1995}. The distribution function of the occupation time may be found in~\cite[Theorem 1.1]{Akahori1995}, its density in~\cite[equation (4a)]{EmbrechtsRogersYor1995}, and its higher moments in~\cite[Remark 6]{Pitters2025}. To the best of our knowledge Brownian motion with non-zero drift is the only example of a L\'evy process $X$ that does not have constant positivity (we define this notion in (i) of Theorem~\ref{thm:getoor_sharpe}) and for which the density and distribution of the law of its occupation time are known. We provide a second example of a L\'evy process that does not have constant positivity and for which we work out the density, Laplace transform, and double Laplace transform of its occupation time in Section~\ref{sec:one-half-stable} of this article.

In~\cite{PitmanYor1992} the authors showed that if $X$ denotes a (skew) Bessel process of dimension $d\in (0, 2),$ then
\begin{align*}
  \Prob{A_1\in dx} &= \frac{a\sin\pi\alpha}{\pi}\frac{x^\alpha\bar{x}^{\alpha-1}+x^{\alpha-1}\bar{x}^\alpha}{a^2x^{2\alpha}+2a(x\bar x)^\alpha\cos\pi\alpha+\bar{x}^{2\alpha}}\mathbf 1_{(0, 1)}(x)dx,
\end{align*}
where $a\coloneqq (1-p)/p,$ $\alpha\coloneqq 1-d/2,$ and $\bar x\coloneqq 1-x,$ and $p$ denotes the probability of a positive excursion of $X$. This law goes back at least to~\cite{Lamperti1958} where it was discovered as the limiting distribution of occupation times for certain discrete time processes. The special case $\alpha=1/2$ of this density was identified as the proportion of time that skew Brownian motion spends above zero in~\cite{KeilsonWellner1978}. For certain bridges of exchangeable and L\'evy processes the positive sojourn time is uniformly distributed, cf.~\cite{FitzsimmonsGetoor1995, Knight1996}. Further examples of some families of one-dimensional diffusions whose occupation times have been characterised explicitely may be found in~\cite{SalminenStenlund2021}.
For $c\in [0, 1]$ let $\Arcsin(c)$ denote the distribution (not to be confused with the trigonometric function typeset as $\arcsin$)
  \begin{align}
    \begin{cases}
      \text{ with density } \frac{\sin c\pi}{\pi}x^{c-1}(1-x)^{-c}\mathbf 1_{(0, 1)}(x)dx & \text{ if } c\in (0, 1),\\
      \delta_c & \text{ if } c\in \{0, 1\},
    \end{cases}
  \end{align}
  supported in $[0, 1],$ where $\delta_x$ denotes the Dirac delta measure with unit mass in $x\in\mathbb R$. We refer to $\Arcsin(c)$ as the generalised arcsine distribution with parameter $c$. Notice that $\Arcsin(1/2)$ is the standard arcsine distribution with density~\eqref{eq:arcsine_density}. Ronald Getoor and Michael Sharpe~\cite[Theorem 2.7]{GetoorSharpe1994} provided the following remarkable characterization of the occupation time of a L\'evy process that we present in slightly different notation.

\begin{theorem}[Cf.~Theorem 2.7 in~\cite{GetoorSharpe1994}]\label{thm:getoor_sharpe}
Consider the occupation time $A_t=\int_0^t\mathbf 1\{ X_s>0\}ds$ of a L\'evy process $X$. The following are equivalent:
\begin{enumerate}[(i)]
  \item Constant positivity. There exists $c\in [0, 1]$ such that $\Prob{X_t>0}=c$ for all $t>0.$
  \item There exists $c\in [0, 1]$ such that $A_t/t\sim\Arcsin(c)$ for any $t>0$.
  \item There exists $c\in [0, 1]$ such that $q\int_0^\infty e^{-qt}\Prob{X_t>0}dt=c$ for any $q>0$.
\end{enumerate}
If one of these properties holds, the respective constants $c$ agree.
\end{theorem}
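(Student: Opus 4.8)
The theorem is a cycle of three equivalences about when a Lévy process has "constant positivity." The most natural strategy is to prove a cycle of implications, say (iii) $\Leftrightarrow$ (i) first (this is essentially a uniqueness/injectivity statement about the Laplace transform), then (i) $\Rightarrow$ (ii), and finally (ii) $\Rightarrow$ (iii). The equivalence (i) $\Leftrightarrow$ (iii) is the soft analytic part: the map $t\mapsto\Prob{X_t>0}$ is a bounded measurable function on $(0,\infty)$, and $q\int_0^\infty e^{-qt}\Prob{X_t>0}\,dt$ is its normalized Laplace transform. If $\Prob{X_t>0}\equiv c$ then the integral is $c$ for every $q$; conversely, if $q\int_0^\infty e^{-qt}\rho(t)\,dt = c$ for all $q$ with $\rho(t)=\Prob{X_t>0}$, then $\int_0^\infty e^{-qt}(\rho(t)-c)\,dt = 0$ for all $q>0$, and by injectivity of the Laplace transform on $L^\infty_{\mathrm{loc}}$ (Lerch's theorem) we get $\rho(t)=c$ for Lebesgue-a.e.\ $t$. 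To upgrade "a.e." to "all $t>0$" one uses a regularity property of $t\mapsto\Prob{X_t>0}$; this is where I expect a small technical wrinkle.

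**The measure-theoretic heart: (ii) $\Rightarrow$ (iii) and (i) $\Rightarrow$ (ii).** For (i) $\Rightarrow$ (ii) the cleanest route is Spitzer-type identity / the scaling-free combinatorial structure, but here we have a shortcut: the paper's main theorem (Theorem~\ref{thm:main}, the Poisson representation) presumably specializes, in the constant-positivity case, to $A_{E^{(q)}}/E^{(q)}\sim\Arcsin(c)$, and since $A_t/t$ does not depend on $t$ in distribution under (i) (again via self-similarity of the underlying combinatorics, or directly because the Poisson intensity scales), one deduces $A_t/t\sim\Arcsin(c)$ for all $t$. Alternatively — and this is likely the route Getoor–Sharpe took — one writes, for the randomized time,
\begin{align*}
  \Ex\left[e^{-\lambda A_{E^{(q)}}}\right] = q\int_0^\infty e^{-qt}\Ex\left[e^{-\lambda A_t}\right]dt,
\end{align*}
and uses the classical Spitzer–Baxter factorization expressing this double transform in terms of $\int_0^\infty e^{-qt}\Prob{X_t>0}\,dt/t$-type quantities; under (iii) the relevant exponent is exactly $c$, and one recognizes the resulting transform as that of $c$ times... no: one recognizes $\Arcsin(c)$ evaluated after the exponential randomization, namely a $\Beta(c,1-c)$ law, which by the standard "de-Poissonization" (the fact that if $A_t/t$ has a fixed law $\mu$ for all $t$ then $A_{E^{(q)}}/E^{(q)}\sim\mu$ too, and conversely the exponential mixture determines $\mu$) yields (ii). For (ii) $\Rightarrow$ (iii): if $A_t/t\sim\Arcsin(c)$ then $\Ex[A_t/t]=c$ for all $t$, i.e. $\frac1t\int_0^t\Prob{X_s>0}\,ds = c$, whence $\int_0^t\Prob{X_s>0}\,ds=ct$, and differentiating (valid a.e., then everywhere by right-continuity of $\Prob{X_\cdot>0}$ off a set) gives $\Prob{X_t>0}=c$, which is even stronger than (iii) — but to land precisely on (iii) one just integrates $e^{-qt}$ against $\frac{d}{dt}\int_0^t\Prob{X_s>0}ds=ct$... more simply, $q\int_0^\infty e^{-qt}\Prob{X_t>0}dt = q\int_0^\infty e^{-qt}\,d(\text{that})$ and Fubini gives $c$.

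**The main obstacle.** The genuinely delicate point is the passage from statements that hold "for Lebesgue-almost every $t>0$" (which is all the Laplace transform sees, and all that $\Ex[A_t/t]=c$ directly gives after differentiation) to statements that hold "for all $t>0$," as (i) demands. One needs to know that $t\mapsto\Prob{X_t>0}$ has enough regularity — it is not continuous in general (think of a compound Poisson process, or a process with atoms), but one can use that for a Lévy process the map $t\mapsto\Prob{X_t>0}$ has only countably many discontinuities, or invoke the $0$–$1$-type arguments: a Lévy process either has $\Prob{X_t>0}$ constant or genuinely varying, and the Hewitt–Savage / Blumenthal zero–one law at time $0$ pins down the value $\lim_{t\downarrow 0}\Prob{X_t>0}$. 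The honest fix is: from "$\Prob{X_t>0}=c$ for a.e.\ $t$" plus the Chapman–Kolmogorov/convolution structure $\Prob{X_{s+t}>0}\ge\Prob{X_s>0}\Prob{X_t>0}+\Prob{X_s<0}\Prob{X_t<0}$ (using independence and stationarity of increments together with a symmetrization), one propagates the equality to every $t$. So the proof skeleton is short, but I would budget most of the write-up for this regularity upgrade and for carefully stating the Spitzer–Baxter identity in the randomized-time form; the rest is bookkeeping with Laplace transforms and the explicit $\Beta(c,1-c)$ density.
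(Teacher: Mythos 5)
Your overall architecture is close to the paper's own route (the paper re-proves this theorem inside Corollary~\ref{cor:main}, going through the double Laplace transform $G(q,\lambda)$, the Frullani integral, the identification $A_{E^{(q)}}\sim\Gam(q,c)$, and a scaling step), and your first-moment argument for (ii)$\Rightarrow$(iii) — namely $\Ex[A_t/t]=\frac1t\int_0^t\Prob{X_s>0}\,ds=c$ — is a clean elementary shortcut the paper does not use. However, there is a genuine gap at exactly the point you flag as delicate. The ``honest fix'' you commit to for upgrading $\Prob{X_t>0}=c$ from a.e.\ $t$ to all $t$ is wrong as stated: the inequality $\Prob{X_{s+t}>0}\ge\Prob{X_s>0}\Prob{X_t>0}+\Prob{X_s<0}\Prob{X_t<0}$ fails (take $X_t=-t$: the left side is $0$, the right side is $1$; the second summand lower-bounds $\Prob{X_{s+t}<0}$, not $\Prob{X_{s+t}>0}$), and even the corrected super-multiplicative bound $\Prob{X_{s+t}>0}\ge\Prob{X_s>0}\Prob{X_t>0}$ only yields $\Prob{X_{t_0}>0}\ge c^2$ at an exceptional time $t_0$, which does not propagate the equality. ``Countably many discontinuities'' is likewise insufficient, since a function can equal $c$ a.e.\ and still differ at one point. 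What is actually needed — and what the paper cites — is the one-sided continuity of $t\mapsto\Prob{X_t>0}$ (Lemma 2.6 of Getoor and Sharpe: this map is either left- or right-continuous on $[0,\infty)$), which converts a.e.\ equality into everywhere equality by approaching any $t$ from the appropriate side through the full-measure set.

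The central implication (i)$\Rightarrow$(ii) is also left at the level of ``presumably specializes'' and ``standard de-Poissonization,'' and the converse de-Poissonization is precisely the nontrivial step: knowing $A_{E^{(q)}}\sim\Gam(q,c)$ for every $q$ determines $\Ex[e^{-\lambda A_t}]$ only for Lebesgue-a.e.\ $t$ and does not by itself identify the law of $A_t/t$ at a fixed $t$. The paper closes this by a scaling argument: the explicit form $G(q,\lambda)=q^{-1}(1+\lambda/q)^{-c}$ satisfies $r^{-1}G(q/r,\lambda/r)=G(q,\lambda)$, so the double Laplace transforms of $t\mapsto\Ex[\exp(-\lambda A_{rt}/r)]$ and $t\mapsto\Ex[\exp(-\lambda A_t)]$ coincide, whence $A_{rt}/r=_d A_t$ for all $r,t$ (using continuity of $t\mapsto\Ex[e^{-\lambda A_t/t}]$ to pass from a.e.\ $t$ to all $t$); only then does the exponential mixture force the common law of $A_t/t$ to be $\Arcsin(c)$. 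Your sketch names the right ingredients, but without these two repairs it is not yet a proof.
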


If $c=\Prob{X_t>0}=\Prob{X_t<0}=1/2$ for all $t>0,$ then  $X$ is called a symmetric L\'evy process (Brownian motion being the prime example), and $A_t/t\sim\Arcsin(1/2)$ in agreement with L\'evy's second arcsine law for Brownian motion. The special case of Theorem~\ref{thm:getoor_sharpe} when $X$ is a symmetric stable L\'evy process (defined in Section~\ref{sec:one-half-stable}) was shown in~\cite{Kac1951}.

In the case of an arbitrary L\'evy process $X$ the distribution of $A_t$ was recently characterised via its moments in~\cite[Theorem 1.3]{AurzadaDoeringPitters2024}. To present this result, we need some further notation. A partition of a set $S$ is a set, $\pi$ say, of nonempty pairwise disjoint subsets of $S$ whose union is $S$. Let $\card S$ denote the cardinality of $S.$ For some natural number $m$ let $\mathscr P_m$ denote the set of all partitions of $\{1, \ldots, m\}$. For functions $f, g\colon\mathbb R\to\mathbb R$ define $f*g\colon\mathbb R\to\mathbb R$ by $(f*g)(t)\coloneqq\int_0^t f(s)g(t-s)ds$. We call $f*g$ the convolution of $f$ and $g$.

\begin{theorem}[Theorem 1.3 in~\cite{AurzadaDoeringPitters2024}]
  Let $X$ denote an arbitrary L\'evy process started from zero, and let $A_t$ denote its positive sojourn time as defined in~\eqref{def:sojourn_time}. Then the distribution of $A_t$ is uniquely determined by its moments
\begin{align}\label{eq:ot_levy_moments}
  \Ex[A_t^m] &= \sum_{\rho\in\mathscr P_m}\int_0^t \left (\conv_{B\in\rho}f_{\card B}\right )(s) ds;\qquad m\in\mathbb N, t\geq 0,
  \intertext{with}
  f_b(u) &\coloneqq u^{b-1}\Prob{X_u>0}; \quad b\in\mathbb N.
\end{align}
\end{theorem}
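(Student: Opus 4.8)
The plan is to compute $\Ex[A_t^m]$ straight from the definition and reduce it, via Spitzer's combinatorial lemma, to the stated sum over partitions.

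First, by Tonelli,
\begin{align*}
  \Ex[A_t^m]=\Ex\Big[\Big(\int_0^t\mathbf 1\{X_s>0\}\,ds\Big)^m\Big]=\int_{[0,t]^m}\Prob{X_{s_1}>0,\dots,X_{s_m}>0}\,ds_1\cdots ds_m .
\end{align*}
The integrand is symmetric in $(s_1,\dots,s_m)$, so this is $m!$ times the integral over $\{0<s_1<\dots<s_m<t\}$. Changing variables to the increments $a_1=s_1,\ a_j=s_j-s_{j-1}$ (Jacobian one) turns that simplex into $\Delta_t\coloneqq\{a_1,\dots,a_m>0,\ a_1+\dots+a_m<t\}$, and, $X$ being L\'evy, $(X_{s_1},\dots,X_{s_m})$ has the law of the partial sums $(Z_1,Z_1+Z_2,\dots,Z_1+\dots+Z_m)$ of independent $Z_j\overset{d}{=}X_{a_j}$. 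Hence $\Prob{X_{s_1}>0,\dots,X_{s_m}>0}=\Ex[\Phi(Z_1,\dots,Z_m)]$ with $\Phi(z_1,\dots,z_m)\coloneqq\prod_{k=1}^m\mathbf 1\{z_1+\dots+z_k>0\}$, and, since $\Delta_t$ is invariant under permuting the $a_j$, I would symmetrise once more over the symmetric group $S_m$ to get
\begin{align*}
  \Ex[A_t^m]=\int_{\Delta_t}\Ex\Big[\sum_{\sigma\in S_m}\Phi\big(Z_{\sigma(1)},\dots,Z_{\sigma(m)}\big)\Big]\,da_1\cdots da_m .
\end{align*}

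The heart of the matter — and the step I expect to be the main obstacle — is the combinatorial identity, a form of Spitzer's lemma: for every real tuple $z_1,\dots,z_m$,
\begin{align*}
  \sum_{\sigma\in S_m}\Phi\big(z_{\sigma(1)},\dots,z_{\sigma(m)}\big)=\sum_{\rho\in\mathscr P_m}\ \prod_{B\in\rho}(\card B-1)!\;\mathbf 1\Big\{\sum_{i\in B}z_i>0\Big\}.
\end{align*}
I would prove this by induction on $m$: conditioning on the last entry of an ordering shows the left-hand side equals $\mathbf 1\{z_1+\dots+z_m>0\}\sum_{v}\big(\text{the same sum over }\{z_i:i\neq v\}\big)$, and one checks that the right-hand side satisfies this recursion and the matching initial value; alternatively the identity can be quoted from random-walk fluctuation theory. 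It is needed only for deterministic tuples, so no continuity assumption on $X$ enters (in particular $\Prob{X_u=0}>0$ causes no trouble).

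I would then substitute the identity under the expectation and integral. Fix $\rho\in\mathscr P_m$ with block-totals $u_B\coloneqq\sum_{i\in B}a_i$. The events $\{\sum_{i\in B}Z_i>0\}$, $B\in\rho$, involve pairwise disjoint and hence independent families of the $Z_i$, and $\sum_{i\in B}Z_i\overset{d}{=}X_{u_B}$ by stationarity and independence of increments; so the expectation factorises into $\prod_{B\in\rho}\Prob{X_{u_B}>0}$ and
\begin{align*}
  \Ex[A_t^m]=\sum_{\rho\in\mathscr P_m}\Big(\prod_{B\in\rho}(\card B-1)!\Big)\int_{\Delta_t}\prod_{B\in\rho}\Prob{X_{u_B}>0}\,da_1\cdots da_m .
\end{align*}
Finally, in each $\rho$-summand the integrand depends on $(a_i)_{i\in B}$ only through $u_B$, so integrating out the within-block coordinates (Tonelli) contributes a factor $u_B^{\card B-1}/(\card B-1)!$ per block, which exactly cancels the prefactor $(\card B-1)!$. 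What remains is
\begin{align*}
  \Ex[A_t^m] &=\sum_{\rho\in\mathscr P_m}\int_{\{u_B>0,\ \sum_{B}u_B<t\}}\prod_{B\in\rho}u_B^{\card B-1}\Prob{X_{u_B}>0}\ \prod_{B\in\rho}du_B \\
  &=\sum_{\rho\in\mathscr P_m}\int_0^t\Big(\conv_{B\in\rho}f_{\card B}\Big)(s)\,ds
\end{align*}
with $f_b(u)=u^{b-1}\Prob{X_u>0}$, which is the claimed formula; the last equality is just the definition of convolution. The uniqueness assertion is immediate, since $A_t$ takes values in the bounded interval $[0,t]$, whose moments determine the law (Weierstrass approximation).
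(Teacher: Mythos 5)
Your route is essentially the one taken in \cite{AurzadaDoeringPitters2024} (the present paper only quotes the theorem): the opening Tonelli identity is exactly the ``sampling'' formula~\eqref{eq:sampling} with i.i.d.\ uniforms written out, and the passage to increments, Spitzer's lemma, and the integration over block totals --- where the simplex volume $u_B^{\card B-1}/(\card B-1)!$ cancels the prefactor $(\card B-1)!$ --- all match, as does the Hausdorff/Weierstrass uniqueness argument for the bounded variable $A_t\in[0,t]$.

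The one step you should not wave at is the combinatorial identity. Conditionally on the spacings $a_1,\dots,a_m$ your increments $Z_j=_d X_{a_j}$ are independent but \emph{not} identically distributed, so the i.i.d.\ form of Spitzer's lemma (Proposition~\ref{prop:spitzers_lemma_bell}) does not apply directly; you genuinely need the pointwise identity for a fixed real tuple, or at least its consequence for independent non-i.i.d.\ summands. That identity is true --- equivalently, the number of linear orders of $z_1,\dots,z_m$ with all prefix sums positive equals the number of permutations of $\{1,\dots,m\}$ all of whose cycles $C$ satisfy $\sum_{i\in C}z_i>0$, the factor $(\card B-1)!$ counting cyclic orders of a block --- but your induction hides the entire difficulty in the phrase ``one checks that the right-hand side satisfies this recursion.'' Concretely, you must show that $\sum_{\rho\in\mathscr P_m}\prod_{B\in\rho}(\card B-1)!\,\mathbf 1\{\sum_{i\in B}z_i>0\}$ vanishes when $z_1+\cdots+z_m\le 0$ (easy: some block sum must be nonpositive) and equals the sum of the analogous quantities over the $m$ ground sets $\{1,\dots,m\}\setminus\{v\}$ when $z_1+\cdots+z_m>0$; the latter is not a routine verification, since the obvious ``delete $v$ from its block'' correspondence does not preserve positivity of block sums. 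Either supply that combinatorial argument or cite the pointwise version of Spitzer's lemma precisely; with that in place the proof is complete.
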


In~\cite{AurzadaDoeringPitters2024} the authors develop a sampling approach to characterise the distribution of $A_t$ via its moments for some arbitrary process $X,$ and apply this method to characterise the occupation times of spherical fractional Brownian motion (cf.~\cite[Theorem 1.2]{AurzadaDoeringPitters2024}), L\'evy processes (cf.~\cite[Theorem 1.3]{AurzadaDoeringPitters2024}), and L\'evy bridges (cf.~\cite[Theorem 1.4]{AurzadaDoeringPitters2024}). We also put this simple yet powerful sampling method to use in order to derive our results in Section~\ref{sec:proofs}.

\section{Main results}

In this article we focus on the positive sojourn time in the case that $X$ is a L\'evy process. Recall that $X$ is said to be a L\'evy process issued from the origin if it has the following properties: i) the paths of $X$ are $\mathbb P$-almost surely right-continuous with left limits, ii) $X_0=0$ a.s., iii) for $0\leq s\leq t,$ $X_t-X_s$ is equal in distribution to $X_{t-s}$, iv) for $0\leq s\leq t,$ $X_t-X_s$ is independent of $(X_u, u\leq s)$.

\subsection{Characterization of positive sojourn time of L\'evy processes}\label{sec:levy}

Our first result is a characterization of the positive sojourn time $A_t$ for abritrary L\'evy processes $X$ in terms of an integral formula for the Laplace transform of $t\mapsto \Ex[\exp(-\lambda A_t)]$ in Proposition~\ref{prop:ot_levy_laplace-transform}.

\begin{proposition}\label{prop:ot_levy_laplace-transform}
  Consider the positive sojourn time $A_t=\int_0^t \mathbf 1\{ X_s>0  \}ds$ up until time $t\geq 0$ of some L\'evy process $(X_t)_{t\geq 0}.$ For $q,\lambda>0$ the Laplace transform of $A_t$ is uniquely determined by
  \begin{align}\label{eq:ot_levy_laplace-transform}
    G(q, \lambda)\coloneqq \Ex\left [\int_0^\infty e^{-qt}\exp(-\lambda A_t)dt\right ] &= q^{-1}\exp\left (-\int_0^\infty e^{-qt}t^{-1}\Prob{X_t>0} (1-e^{-\lambda t}) dt\right ).
  \end{align}
\end{proposition}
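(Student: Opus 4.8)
The plan is to combine the moment formula~\eqref{eq:ot_levy_moments} of~\cite{AurzadaDoeringPitters2024} with the exponential formula for Bell polynomials, organising the resulting double sum as a single Laplace transform in the time variable. Since $0\le A_t\le t$ we have $\Ex[A_t^m]\le t^m$, so $\Ex[\exp(-\lambda A_t)]=\sum_{m\ge 0}\frac{(-\lambda)^m}{m!}\Ex[A_t^m]$ converges absolutely and determines the law of $A_t$. I would first work in the range $q>\lambda>0$: there the estimate $\int_0^\infty e^{-qt}\Ex[A_t^m]\,dt\le\int_0^\infty e^{-qt}t^m\,dt=m!\,q^{-m-1}$ makes $\sum_m\frac{\lambda^m}{m!}\cdot m!\,q^{-m-1}=(q-\lambda)^{-1}$ finite, so dominated convergence, together with Fubini's theorem applied to $\Ex[\int_0^\infty e^{-qt}\exp(-\lambda A_t)\,dt]$, lets me write
\begin{align*}
G(q,\lambda)=\sum_{m\ge 0}\frac{(-\lambda)^m}{m!}\int_0^\infty e^{-qt}\Ex[A_t^m]\,dt .
\end{align*}

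Next I would transform each moment. Writing $\hat h(q)=\int_0^\infty e^{-qu}h(u)\,du$, the Laplace transform of $t\mapsto\int_0^t h(s)\,ds$ is $q^{-1}\hat h(q)$ and the transform of a convolution is the product of the transforms (the functions $f_b(u)=u^{b-1}\Prob{X_u>0}$ are nonnegative, so Tonelli applies freely). Plugging $\Ex[A_t^m]=\sum_{\rho\in\mathscr P_m}\int_0^t(\conv_{B\in\rho}f_{\card B})(s)\,ds$ into the display gives
\begin{align*}
\int_0^\infty e^{-qt}\Ex[A_t^m]\,dt=q^{-1}\sum_{\rho\in\mathscr P_m}\prod_{B\in\rho}\hat f_{\card B}(q),\qquad \hat f_b(q)=\int_0^\infty e^{-qu}u^{b-1}\Prob{X_u>0}\,du ,
\end{align*}
and from $\hat f_b(q)\le(b-1)!\,q^{-b}$ the remaining sums all converge absolutely for $q>\lambda$. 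The expression $\sum_{\rho\in\mathscr P_m}\prod_{B\in\rho}\hat f_{\card B}(q)$ is a sum over set partitions of $\{1,\dots,m\}$ of a product over blocks depending only on block sizes, so the exponential formula yields
\begin{align*}
G(q,\lambda)&=q^{-1}\exp\left(\sum_{k\ge 1}\frac{(-\lambda)^k}{k!}\hat f_k(q)\right)\\
&=q^{-1}\exp\left(\int_0^\infty e^{-qu}u^{-1}\Prob{X_u>0}(e^{-\lambda u}-1)\,du\right),
\end{align*}
where the second equality interchanges $\sum_k$ with the integral defining $\hat f_k$ and uses $\sum_{k\ge 1}(-\lambda u)^k/k!=e^{-\lambda u}-1$. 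This is~\eqref{eq:ot_levy_laplace-transform} for $q>\lambda$.

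To remove the restriction $q>\lambda$ I would argue by analytic continuation: both sides of~\eqref{eq:ot_levy_laplace-transform} extend analytically to the half-plane $\{\Re q>0\}$ --- the left side because $\Ex[\exp(-\lambda A_t)]\le 1$, the right side because $\int_0^\infty e^{-qt}t^{-1}\Prob{X_t>0}(1-e^{-\lambda t})\,dt$ converges locally uniformly in $q$ (its integrand is $O(\lambda)$ as $t\downarrow 0$ and exponentially small as $t\to\infty$) and is therefore analytic in $q$ --- and they agree on the ray $(\lambda,\infty)$, hence on all of $\{\Re q>0\}$, in particular for $q>0$. Finally, for fixed $t$, $\lambda\mapsto\Ex[\exp(-\lambda A_t)]$ is the Laplace transform of the law of $A_t$ and thus determines it, while $G(q,\lambda)$ is the Laplace transform in $t$ of the continuous bounded map $t\mapsto\Ex[\exp(-\lambda A_t)]$ and so recovers it by Laplace inversion; this is the uniqueness assertion.

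The computation itself is a routine application of the exponential formula; the step that needs genuine care is the bookkeeping of the interchanges of sums and integrals, handled above by temporarily imposing $q>\lambda$ and then restoring $q>0$ by analytic continuation. A more self-contained alternative would be to Poissonise the sampling method of~\cite{AurzadaDoeringPitters2024} directly: place a rate-$\lambda$ Poisson process of marks on a time window of length $E^{(q)}$, an independent exponential random variable of rate $q$, observe that the conditional probability given $X$ that no mark lands in $\{s\colon X_s>0\}$ equals $e^{-\lambda A_{E^{(q)}}}$ because that set has Lebesgue measure $A_{E^{(q)}}$, and evaluate this probability via Spitzer's combinatorial lemma; but routing through the already established moment formula is shorter.
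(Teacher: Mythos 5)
Your proof is correct and follows essentially the same route as the paper's: both pass through the Laplace transform in $t$ of the moment formula of Aurzada--D\"oring--Pitters (your expression $q^{-1}\sum_{\rho\in\mathscr P_m}\prod_{B\in\rho}\hat f_{\card B}(q)$ is exactly the paper's $\tfrac{m!}{q^{m+1}}\Prob{X_{T^{(q)}_k}>0,\,1\leq k\leq m}$ rewritten via Spitzer's lemma) and then the exponential formula for sums over set partitions, ending with the same Frullani-type integral. Your bookkeeping of the interchanges --- working first on $q>\lambda$, where the bound $\hat f_b(q)\leq (b-1)!\,q^{-b}$ gives absolute convergence, and then extending to all $q>0$ by analytic continuation --- is if anything more careful than the paper's direct appeal to dominated convergence.
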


As a Corollary we recover the generalised arcsine law for L\'evy processes with constant positivity due to Getoor and Sharpe~\cite[Theorem 2.7]{GetoorSharpe1994}. We say that a real random variable is gamma distributed with scale parameter $q>0$ and shape parameter $k>0$ (or: with parameters $q, k$ for short), if its distribution has density $t\mapsto \mathbf 1_{(0, \infty)}(t)q^kt^{k-1}e^{-qt}/\Gamma(k)$. We denote the gamma distribution with parameters $q,k$ as $\Gam(q, k),$ and we write $\Exp(q)=\Gam(q, 1)$ for an exponential distribution with rate parameter $q$.

\begin{corollary}\label{cor:main}
  Consider the setting of Proposition~\ref{prop:ot_levy_laplace-transform}. Then the following statements are equivalent:
\begin{enumerate}[(i)]
  \item Constant positivity. There exists some $c\in [0, 1]$ such that $\Prob{X_t>0}=c$ for all $t>0$.
  \item There exists $c\in [0, 1]$ such that for any $t>0$ we have $A_t/t\sim\Arcsin(c)$.
  \item There exists $c\in [0, 1]$ such that $q\int_0^\infty e^{-qt}\Prob{X_t>0}dt=c$ for any $q>0$.
  \item $G(q, \lambda)=1/q(1+\lambda/q)^{c}$ for all $q, \lambda>0$ and some $c\in [0, 1]$.
  \item If $E^{(q)}\sim\Exp(q)$ is independent of $X$, then $A_{E^{(q)}}\sim\Gam(q,  c)$ for some $c\in [0, 1]$, the r.v.s $E^{(q)}, A_{E^{(q)}}/E^{(q)}$ are independent, and
  \begin{align}\label{eq:ratio}
    \frac{A_{E^{(q)}}}{E^{(q)}}\sim\Arcsin(c).
  \end{align}
\end{enumerate}
If one of the above properties is met, the respective constants $c$ agree.
\end{corollary}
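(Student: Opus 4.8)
The plan is to build the proof on the double Laplace transform formula~\eqref{eq:ot_levy_laplace-transform} of Proposition~\ref{prop:ot_levy_laplace-transform} together with the elementary beta--gamma calculus. Since Theorem~\ref{thm:getoor_sharpe} already furnishes the equivalence of (i), (ii) and (iii) with one common constant $c$, it suffices to splice (iv) and (v) into this chain, so I would establish the cycle (i)$\Rightarrow$(iv)$\Rightarrow$(v)$\Rightarrow$(iii). First I would dispatch the two degenerate cases $c\in\{0,1\}$: there $\Prob{X_t>0}\equiv c$ forces $A_t\equiv 0$, respectively $A_t\equiv t$, almost surely, and all of (i)--(v) become immediate once one recalls $\Arcsin(0)=\Gam(q,0)=\delta_0$ and $\Arcsin(1)=\delta_1$. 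Assume henceforth $c\in(0,1)$. For (i)$\Rightarrow$(iv), substituting $\Prob{X_t>0}\equiv c$ into~\eqref{eq:ot_levy_laplace-transform} turns the exponent into the Frullani integral $c\int_0^\infty t^{-1}\bigl(e^{-qt}-e^{-(q+\lambda)t}\bigr)\,dt=c\log(1+\lambda/q)$, whence $G(q,\lambda)=q^{-1}(1+\lambda/q)^{-c}$, which is exactly (iv).

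For (iv)$\Rightarrow$(v), conditioning on the independent clock gives $\Ex\bigl[e^{-\lambda A_{E^{(q)}}}\bigr]=q\,G(q,\lambda)=(1+\lambda/q)^{-c}$, the Laplace transform of $\Gam(q,c)$, so $A_{E^{(q)}}\sim\Gam(q,c)$. To obtain the independence and~\eqref{eq:ratio} I would pass to the joint Laplace transform of the pair $\bigl(A_{E^{(q)}},\,E^{(q)}-A_{E^{(q)}}\bigr)$: for $\lambda,\mu>0$,
\[ \Ex\bigl[e^{-\lambda A_{E^{(q)}}-\mu(E^{(q)}-A_{E^{(q)}})}\bigr]=q\,\Ex\Bigl[\int_0^\infty e^{-(q+\mu)t}e^{-(\lambda-\mu)A_t}\,dt\Bigr]=q\,G(q+\mu,\lambda-\mu), \]
where the last equality is~\eqref{eq:ot_levy_laplace-transform} with the positive effective rates $q+\mu$ and $q+\lambda$ (so the underlying Frullani integral still converges even when $\lambda-\mu<0$). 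Inserting $\Prob{X_t>0}\equiv c$ and simplifying, this becomes $\bigl(q/(q+\lambda)\bigr)^{c}\bigl(q/(q+\mu)\bigr)^{1-c}$, a product of the Laplace transforms of $\Gam(q,c)$ and $\Gam(q,1-c)$. Hence $A_{E^{(q)}}$ and $E^{(q)}-A_{E^{(q)}}$ are independent, distributed as $\Gam(q,c)$ and $\Gam(q,1-c)$ respectively; the beta--gamma algebra then yields that $E^{(q)}=A_{E^{(q)}}+(E^{(q)}-A_{E^{(q)}})$ is independent of the ratio $A_{E^{(q)}}/E^{(q)}\sim\Beta(c,1-c)$, and the reflection identity $\Gamma(c)\Gamma(1-c)=\pi/\sin(\pi c)$ shows $\Beta(c,1-c)=\Arcsin(c)$, which is~\eqref{eq:ratio}.

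For (v)$\Rightarrow$(iii) I would take expectations: $A_{E^{(q)}}\sim\Gam(q,c)$ gives $\Ex[A_{E^{(q)}}]=c/q$, whereas Fubini gives $\Ex[A_{E^{(q)}}]=q\int_0^\infty e^{-qt}\int_0^t\Prob{X_s>0}\,ds\,dt=\int_0^\infty e^{-qt}\Prob{X_t>0}\,dt$; equating the two yields $q\int_0^\infty e^{-qt}\Prob{X_t>0}\,dt=c$ for every $q>0$, i.e.\ (iii), with the same constant. Invoking Theorem~\ref{thm:getoor_sharpe} then closes the cycle and forces the five constants to coincide.

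The step I expect to cost the most care is (iv)$\Rightarrow$(v): one must justify evaluating the double Laplace transform at the possibly negative second argument $\lambda-\mu$, and keep track of the boundary behaviour as $c\downarrow 0$ or $c\uparrow 1$, where several gamma laws collapse to point masses. A cheaper alternative that sidesteps the joint Laplace transform is to observe that, since $A_{E^{(q)}}/E^{(q)}$ is independent of $E^{(q)}$, the law of $A_t/t$ equals $\Arcsin(c)$ for Lebesgue-a.e.\ $t$, and then upgrade this to every $t>0$ using that $t\mapsto A_t$ is $1$-Lipschitz (so $A_t/t\to A_{t_0}/t_0$ pathwise, hence weakly, as $t\to t_0>0$); this route reaches (ii) directly but still needs Theorem~\ref{thm:getoor_sharpe} (or a separate argument that $\Prob{X_t>0}$ being a.e.\ constant implies it is constant) to return to (i), so I would keep the passage through (iii).
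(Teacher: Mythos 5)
Your argument is sound in outline and reaches the same conclusions, but it departs from the paper's proof in two respects, one of which creates a genuine (though easily repaired) logical gap.

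First, the comparison. The paper does not invoke Theorem~\ref{thm:getoor_sharpe} as a black box: the point of the corollary is to \emph{recover} the Getoor--Sharpe equivalences from Proposition~\ref{prop:ot_levy_laplace-transform}, so the paper proves (i)$\Leftrightarrow$(iv), (ii)$\Leftrightarrow$(v), (v)$\Rightarrow$(iv), (iv)$\Rightarrow$(ii) (the latter by a scaling argument on the double Laplace transform) and (i)$\Leftrightarrow$(iii) directly, borrowing from Getoor--Sharpe only the one-sided continuity of $t\mapsto\Prob{X_t>0}$ to upgrade an a.e.\ identity to all $t>0$. Your shortcut through Theorem~\ref{thm:getoor_sharpe} is logically admissible but defeats the stated purpose of the corollary. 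On the other hand, your handling of (v) improves on the paper's in two places: the factorisation $q\,G(q+\mu,\lambda-\mu)=(q/(q+\lambda))^c(q/(q+\mu))^{1-c}$ delivers the independence of $A_{E^{(q)}}$ and $E^{(q)}-A_{E^{(q)}}$, hence the independence of $E^{(q)}$ and the ratio and the law $\Beta(c,1-c)=\Arcsin(c)$, in one stroke (the paper instead checks independence by a joint moment computation), and your derivation of (iii) from $\Ex[A_{E^{(q)}}]=c/q$ is a clean one-liner the paper does not use.

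The gap: your step (iv)$\Rightarrow$(v) is not actually carried out under hypothesis (iv) alone. To evaluate $G(q+\mu,\lambda-\mu)$ you ``insert $\Prob{X_t>0}\equiv c$'', i.e.\ you use (i), and your justification for the negative second argument (convergence of the Frullani integral) likewise presupposes constant positivity. As written you prove [(i) and (iv)]$\Rightarrow$(v), so your cycle never shows that (iv) implies anything, and (iv) remains a consequence of the other conditions rather than an equivalent one. Two repairs are available: (a) first prove (iv)$\Rightarrow$(i), as the paper does, by noting that (iv) determines the Laplace transform of $t\mapsto t^{-1}\Prob{X_t>0}(1-e^{-\lambda t})$ and hence $\Prob{X_t>0}$ for a.e.\ $t$, upgraded to every $t$ by the continuity lemma of Getoor--Sharpe; or (b) observe that, since $0\le A_t\le t$, both $\lambda\mapsto G(q,\lambda)$ and $\lambda\mapsto q^{-1}(1+\lambda/q)^{-c}$ are analytic on $\{\Re\lambda>-q\}$, so the identity in (iv) extends by analytic continuation to the argument $\lambda-\mu>-(q+\mu)$ that you need. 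Either fix closes the cycle. Finally, your passage from (v) to (iii) tacitly assumes the constant $c$ in $A_{E^{(q)}}\sim\Gam(q,c)$ is the same for all $q$; this is the intended reading of (v), but you should say so, since otherwise $\Ex[A_{E^{(q)}}]=c_q/q$ only yields (iii) with a $q$-dependent constant.
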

The equivalence of (i), (ii) and (iii) in Corollary~\ref{cor:main} goes back to Getoor and Sharpe, cf.~Theorem~\ref{thm:getoor_sharpe}. The function $G(q, \lambda)$ in (iv) featured in their proof of~\cite[Theorem 2.7]{GetoorSharpe1994}.

\begin{remark}
  If $X$ is Brownian motion, then $A_1=_d t^{-1}A_t$ for any $t>0$ is an immediate consequence of the scaling property. In fact, this argument applies to any scale-invariant L\'evy process.
\end{remark}

As a consequence of Proposition~\ref{prop:ot_levy_laplace-transform} we obtain our main result, a representation of the occupation time up until an exponential time independent of $X$ as the sum of points in a Poisson process, Theorem~\ref{thm:main}.

\begin{theorem}[Poisson representation of positive sojourn time of L\'evy process]\label{thm:main}
  Consider the positive sojourn time $A_t=\int_0^t \mathbf 1\{ X_s>0  \}ds$ up until time $t>0$ of some L\'evy process $(X_t)_{t\geq 0}.$ For some $q>0$ let $E^{(q)}\sim\Exp(q)$ be independent of $X$. Then we have the representation in law
  \begin{align}\label{eq:main_identity}
    A_{E^{(q)}} =_d  \sum_{T\in \Pi} T,
  \end{align}
  where $\Pi$ is a Poisson process on the positive half-line with intensity $e^{-qt}t^{-1}\Prob{X_t>0}$. In particular, the mean and the variance of $A_{E^{(q)}}$ are given by
  \begin{align}\label{eq:main_mean_variance}
    \Ex\left [A_{E^{(q)}}\right ] = \int_0^\infty \Prob{X_t>0}e^{-qt}dt\leq q^{-1},\qquad \Var\left (A_{E^{(q)}}\right )=\int_0^\infty \Prob{X_t>0}te^{-qt}dt\leq 1.
  \end{align}
  
  If $X$ has constant positivity, i.e.~$\Prob{X_t>0}=c$ for some $c\in [0,1]$ and all $t>0,$ then colour each point $T$ of the Poisson process $\bar\Pi$ on the positive half-life with intensity $e^{-qt}t^{-1}$ black with probability $c$ and white otherwise, the colour being independent of $X$ and of the colours of other points. Then
      $$\Gamma\coloneqq \sum_{T\in \bar\Pi, T\text{ is black}} T\sim\Gam(q, c),\qquad \Gamma'\coloneqq\sum_{T\in \bar\Pi, T\text{ is white}} T\sim\Gam(q, 1-c),$$
  are independent, and
  \begin{align}\label{eq:poisson_ratio}
    \frac{A_t}{t} =_d \frac{\Gamma}{\Gamma+\Gamma'}\sim\Arcsin(c).
  \end{align}
\end{theorem}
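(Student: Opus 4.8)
The plan is to derive the Poisson representation \eqref{eq:main_identity} directly from the Laplace-transform formula \eqref{eq:ot_levy_laplace-transform} of Proposition~\ref{prop:ot_levy_laplace-transform}, and then to read off the constant-positivity consequences by a colouring argument.

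First I would compute the Laplace transform of $A_{E^{(q)}}$ by noting that, for $E^{(q)}\sim\Exp(q)$ independent of $X$, we have $\Ex[\exp(-\lambda A_{E^{(q)}})] = q\,\Ex\!\left[\int_0^\infty e^{-qt}\exp(-\lambda A_t)\,dt\right] = q\,G(q,\lambda)$, so by \eqref{eq:ot_levy_laplace-transform}
\begin{align*}
  \Ex\!\left[\exp(-\lambda A_{E^{(q)}})\right] = \exp\left(-\int_0^\infty (1-e^{-\lambda t})\,e^{-qt}t^{-1}\Prob{X_t>0}\,dt\right).
\end{align*}
The right-hand side is precisely the Laplace functional of $\sum_{T\in\Pi}T$ where $\Pi$ is a Poisson process on $(0,\infty)$ with intensity measure $\nu(dt)=e^{-qt}t^{-1}\Prob{X_t>0}\,dt$; this is the Campbell / exponential formula for Poisson processes, namely $\Ex\exp(-\lambda\sum_{T\in\Pi}T)=\exp(-\int_0^\infty(1-e^{-\lambda t})\,\nu(dt))$. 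One checks that $\nu$ is a legitimate Lévy measure for a subordinator: near $0$, $\Prob{X_t>0}\le 1$ makes $\int_0^1 t\cdot e^{-qt}t^{-1}\Prob{X_t>0}\,dt<\infty$, and near $\infty$ the factor $e^{-qt}$ ensures integrability, so $\int_0^\infty(1\wedge t)\,\nu(dt)<\infty$. Uniqueness of Laplace transforms on $[0,\infty)$ then yields \eqref{eq:main_identity}. The mean and variance in \eqref{eq:main_mean_variance} follow by differentiating the cumulant $\psi(\lambda)=\int_0^\infty(1-e^{-\lambda t})\,\nu(dt)$ at $\lambda=0$, giving $\Ex[A_{E^{(q)}}]=\int_0^\infty t\,\nu(dt)=\int_0^\infty e^{-qt}\Prob{X_t>0}\,dt$ and $\Var(A_{E^{(q)}})=\int_0^\infty t^2\,\nu(dt)=\int_0^\infty t\,e^{-qt}\Prob{X_t>0}\,dt$, with the stated bounds $\le q^{-1}$ and $\le 1$ coming from $\Prob{X_t>0}\le 1$ and $\int_0^\infty te^{-qt}\,dt=q^{-2}$, $\int_0^\infty t^2 e^{-qt}\,dt=2q^{-3}$ — wait, I should double check: $\int_0^\infty t\,e^{-qt}\,dt=q^{-2}\le q^{-1}$ only requires $q\ge 1$, so the bound should instead be read as saying the mean is at most the mean of $E^{(q)}$; I would state it as $\Ex[A_{E^{(q)}}]\le\Ex[E^{(q)}]=q^{-1}$, which holds because $A_t\le t$ pointwise, and similarly $\Var(A_{E^{(q)}})\le\Var(E^{(q)})+\text{(correction)}$; cleanest is just to invoke $A_{E^{(q)}}\le E^{(q)}$ pathwise for the moment bounds rather than the integral formulas.

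For the constant-positivity part, suppose $\Prob{X_t>0}=c$ for all $t>0$. Then $\nu(dt)=c\,e^{-qt}t^{-1}\,dt$, which is exactly $c$ times the intensity $\bar\nu(dt)=e^{-qt}t^{-1}\,dt$ of the Poisson process $\bar\Pi$ in the statement. By the colouring theorem for Poisson processes, the black points of $\bar\Pi$ (retained independently with probability $c$) form a Poisson process with intensity $c\,\bar\nu$, and the white points form an independent Poisson process with intensity $(1-c)\,\bar\nu$. Hence $\Gamma_c:=\sum_{T\text{ black}}T$ has the same law as $\sum_{T\in\Pi}T=_d A_{E^{(q)}}$, and by the first part its Laplace transform is $\exp(-c\int_0^\infty(1-e^{-\lambda t})e^{-qt}t^{-1}\,dt)$. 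The integral $\int_0^\infty(1-e^{-\lambda t})e^{-qt}t^{-1}\,dt=\log(1+\lambda/q)$ is the classical Frullani integral, so $\Ex e^{-\lambda\Gamma_c}=(1+\lambda/q)^{-c}$, which is the Laplace transform of $\Gam(q,c)$; thus $\Gamma_c\sim\Gam(q,c)$ and likewise $\Gamma'_{1-c}\sim\Gam(q,1-c)$, and they are independent because the black and white point sets are. Finally, by the classical beta--gamma algebra, if $\Gamma_c\sim\Gam(q,c)$ and $\Gamma'_{1-c}\sim\Gam(q,1-c)$ are independent then $\Gamma_c/(\Gamma_c+\Gamma'_{1-c})\sim\Beta(c,1-c)$, whose density is $\frac{\Gamma(1)}{\Gamma(c)\Gamma(1-c)}x^{c-1}(1-x)^{-c}=\frac{\sin c\pi}{\pi}x^{c-1}(1-x)^{-c}$ on $(0,1)$ by Euler's reflection formula — i.e.\ $\Arcsin(c)$ for $c\in(0,1)$, and the degenerate cases $c\in\{0,1\}$ are immediate. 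It remains to identify $A_t/t$ with this ratio: since $A_{E^{(q)}}=_d\Gamma_c\sim\Gam(q,c)$ and $E^{(q)}=\Gamma_c+\Gamma'_{1-c}\sim\Gam(q,1)=\Exp(q)$ (the sum of the two independent gammas), and since for independent gammas with common scale the ratio to the sum is independent of the sum, we get that $A_{E^{(q)}}/E^{(q)}$ is independent of $E^{(q)}$ with law $\Beta(c,1-c)=\Arcsin(c)$; because this holds for every $q>0$ and $A_{E^{(q)}}/E^{(q)}$ is a mixture over $t$ of the laws of $A_t/t$ against the $\Exp(q)$ density, and the mixed law does not depend on $q$, a uniqueness argument (Laplace transform in $q$, as in Theorem~\ref{thm:getoor_sharpe}(iii)) forces $A_t/t\sim\Arcsin(c)$ for (Lebesgue-a.e., hence by right-continuity every) $t>0$, giving \eqref{eq:poisson_ratio}.

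The main obstacle is the last identification step: the representation \eqref{eq:main_identity} only controls $A$ at the random time $E^{(q)}$, so passing to a statement about $A_t$ at deterministic $t$ requires de-randomizing the exponential time. I expect to handle this exactly as in the Getoor--Sharpe equivalence already quoted — namely, the constancy in $q$ of the law of $A_{E^{(q)}}/E^{(q)}$, combined with injectivity of the Laplace transform $q\mapsto\int_0^\infty q e^{-qt}\varphi(t)\,dt$ applied to $\varphi(t)=\Ex[f(A_t/t)]$ for bounded continuous $f$ — rather than re-deriving it; alternatively, for scale-invariant $X$ one can quote the Remark after Corollary~\ref{cor:main} that $A_t/t=_d A_1$ outright. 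The Poisson-process inputs (Campbell's exponential formula, the colouring/thinning theorem) and the two classical integrals (Frullani, beta--gamma with Euler reflection) are all standard and I would cite them without proof.
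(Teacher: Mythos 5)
Your proof is correct and follows essentially the same route as the paper: Proposition~\ref{prop:ot_levy_laplace-transform} gives the Laplace transform of $A_{E^{(q)}}$, Campbell's theorem identifies it with the Laplace functional of $\sum_{T\in\Pi}T$, and the constant-positivity case is handled by the colouring theorem together with the beta--gamma algebra, with the de-randomization of $E^{(q)}$ delegated to the Getoor--Sharpe equivalence (Corollary~\ref{cor:main}) exactly as in the paper. Your worry about the mean bound is unfounded --- $\Ex[A_{E^{(q)}}]=\int_0^\infty t\,\nu(dt)=\int_0^\infty e^{-qt}\Prob{X_t>0}\,dt\le\int_0^\infty e^{-qt}\,dt=q^{-1}$ directly, since the factor $t$ from $f(x)=x$ cancels the $t^{-1}$ in the intensity --- though you are right to be suspicious of the variance: the correct bound is $\int_0^\infty te^{-qt}\Prob{X_t>0}\,dt\le q^{-2}$, which exceeds the paper's stated bound of $1$ when $q<1$.
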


\emph{Some observations about and questions raised by Theorem~\ref{thm:main}.}
\begin{itemize}
  \item If $X$ is a symmetric L\'evy process ($c=1/2$) we have from~\eqref{eq:poisson_ratio}
  $$ \frac{A_t}{t} =_d \frac{\Gamma_{\frac 1 2}}{\Gamma_{\frac 1 2}+\Gamma'_{\frac 1 2}} =_d \frac{N'^2}{N'^2+N^2}, $$
    and we recover the ratio of independent standard Gaussians $N, N'$ in the derivation of L\'evy's second arcsine law for Brownian motion by Pitman and Yor, cf.~\eqref{eq:arcsine_law_excursions}, since $N^2\sim\Gam(1/2, 1/2)$.
%  \item It is somewhat surprising that the representation~\eqref{eq:main_identity} holds without any further assumptions on the L\'evy process $X.$ Is there a similar representation for $A_t$ in this generality?
  \item Take $X$ to be Brownian motion. This process has continuous sample paths, in particular the integral $A_{E^{(q)}}=\int_0^{E^{(q)}} \mathbf 1\{ X_s>0  \}ds$ is a continuous r.v., but the sum on the right hand side of~\eqref{eq:main_identity} is a rather discrete-looking object. How can this be reconciled?
  \item Consider a L\'evy process $X$ with constant positivity, i.e.~$\Prob{X_t>0}=c$ for some $c\in (0, 1)$ and any $t>0.$ The sum $\sum_{T\in \Pi} T$ is almost surely finite as we saw in Corollary~\ref{cor:main}, and the random vector constructed by ordering the relative sizes
  $$ \frac{T}{\sum_{T\in \Pi} T}; \quad T\in\Pi,  $$
  of the points $T$ of $\Pi$ in non-increasing order is Poisson-Dirichlet distributed with parameter $c$. In fact, this is one construction (of many) of the Poisson-Dirichlet distribution, and due to John Kingman, cf.~\cite[Chapter 9]{Kingman1993}. At first sight it seems rather surprising that the Poisson-Dirichlet distribution appears so naturally and in such a general setting of L\'evy processes, which leads us to the next question.
  \item Is there an almost sure identity underlying representation~\eqref{eq:main_identity}? It is tempting to think that the summands on the right hand side of~\eqref{eq:main_identity} are (related to) the lengths of positive excursions of the process $X$ (up until time $E^{(q)}$), provided that the classical excursion theory of It\^o (cf.~\cite{PitmanYor2007}) may be applied to $X.$ However, It\^o's excursion theory crucially relies on the notion of a local time. What about L\'evy processes $X$ that do not have a local time and therefore cannot be studied by It\^o's excursion theory? Clearly, an almost sure interpretation of~\eqref{eq:main_identity} in its full generality in terms of ``excursions'' of $X$ would require an extension of It\^o's excursion theory.
\end{itemize}

We plan to investigate above questions in forthcoming work. To the best of our knowledge, i) the representation of the positive sojourn time as a sum of points in a Poisson process in Theorem~\ref{thm:main}, ii) the characterization of the positive sojourn time via its double Laplace transform in Proposition~\ref{prop:ot_levy_laplace-transform} together with iii) the characterization of $A_t$ via its moments, Theorem 1.3 in~\cite{AurzadaDoeringPitters2024}, are the only explicit characterizations of the law of $A_t$ known so far in the general case of an arbitrary L\'evy process.

We now apply the integral formula of Proposition~\ref{prop:ot_levy_laplace-transform} to the strictly stable subordinator with index of stability $1/2$ and drift to obtain both the Laplace transform of $A_t$ and the density of its distribution in Theorem~\ref{thm:ot_one-half-stable}.

\subsection{Positive sojourn time of $(1/2)$-stable subordinator with drift}\label{sec:one-half-stable}
We now turn to a specific example of a L\'evy process, the $(1/2)$-stable subordinator with drift, that we will define shortly. Studying the distribution of the positive sojourn time of this process is particularly interesting. Its positivity function
%\begin{align*}
%    p^{(-\mu)}_t = \erf\left (\sqrt\frac{t}{\mu} \right );\qquad t\geq 0,
%\end{align*}
is known explicitely (cf.~equation~\eqref{eq:subordinator_positivity_function}); however, this positivity function is not constant, it is therefore not covered by the characterization in Corollary~\ref{cor:main}, and therefore it cannot be arcsine distributed. Moreover, despite having an analytic expression for this positivity function, cf.~equation~\eqref{eq:subordinator_positivity_function}, we were not able to work out the moments of the occupation time via Theorem~\ref{thm:levy_moments}, as the convolutions in equation~\eqref{eq:ot_levy_moments} appear to be somewhat convolved. It turns out, however, that in this case Theorem~\ref{prop:ot_levy_laplace-transform} lends itself better for calculations and is the more elegant way to proceed.

Before we define the $(1/2)$-stable subordinator, we recall some basic notions of stable distributions and L\'evy processes mainly following~\cite{Janson2022}. We say that the distribution of a non-degenerate random variable $X$ has a stable distribution if there exist constants $a_n>0,$ $b_n$ such that, for any $n\geq 1,$ if $X_1, X_2, \ldots$ are i.i.d.~copies of $X$ and $S_n\coloneqq\sum_{k=1}^n X_k,$ then $S_n=_d a_nX+b_n$. The distribution is strictly stable iff $b_n=0$. We call the r.v.~$X$ (strictly) stable if its distribution is. The norming constants $a_n$ are necessarily of the form $a_n=n^{1/\alpha}$ for some $\alpha\in (0, 2],$ and $\alpha$ is called the index or characteristic exponent of the distribution. We call a distribution (or random variable) $\alpha$-stable if it is stable with index $\alpha.$ For equivalent definitions of and more information on stable distributions see~\cite{SamarodnitskyTaqqu1994, KyprianouPardo2022, Janson2022}. If the L\'evy process $(X_t)_{t\geq 0}$ has non-decreasing paths it is called a subordinator. We call $(X_t)_{t\geq 0}$ a strictly stable L\'evy process if its marginal distributions at each fixed time are non-Gaussian and strictly stable. Equivalently, a strictly stable L\'evy process is a L\'evy process $(X_t)$ for which there exists an $\alpha>0$ such that, for any $c>0,$ $(cX_{c^{-\alpha}t})_{t\geq 0}$ is equal in distribution to $(X_t)$; cf.~\cite[Chapter 3]{KyprianouPardo2022} and notice that when the authors refer to a stable random variable/L\'evy process they mean what we call `strictly stable'.

Let $(S_t)_{t\geq 0}$ denote the strictly stable subordinator with index of stability $1/2$. This process may be constructed as the first hitting time of one-dimensional standard Brownian motion $(B_t),$ i.e.
\begin{align}
  S_t \coloneqq \inf\left \{s>0\colon B_s=\frac{t}{\sqrt 2}  \right\}.
\end{align}
For a real number $\mu$ fixed arbitrarily define the strictly stable subordinator $S^{(\mu)}\coloneqq \{ S^{(\mu)}_t, t\geq 0  \}$ with index of stability $1/2$ and drift $\mu$ by setting $S^{(\mu)}_t\coloneqq S_t+\mu t$, and denote its positive sojourn time
\begin{align}
  A_t^{(\mu)}\coloneqq \int_0^t \mathbf 1_{(0, \infty)}(S^{(\mu)}_s)ds; \qquad t\geq 0.
\end{align}
Since for $\mu\geq 0$ we have $A^{(\mu)}_t=1$ a.s.~for $t>0,$ we henceforth focus on the case of negative drift $-\mu<0$. (For $\mu>0$ it is obvious that $A^{(\mu)}_t=1$ a.s.. To see this for $\mu=0,$ it suffices to show that the law of $A^{(0)}_t$ has no atom in zero, since $S^{(0)}$ is a subordinator. $S^{(0)}$ is a strictly stable L\'evy process and thus has constant positivity: $\Prob{S_t>0}=c>0$ for some $c>0$ and any $t>0.$ By Corollary~\ref{cor:main}, $A_t^{(0)}/t$ has distribution $\Arcsin(c)$, hence no atom at zero.)

Whenever it appears more convenient, we work with the error function instead of the Gaussian distribution function. Recall that the error function is defined as, cf.~\cite[7.2.1]{OlverLozierBoisvertClark2010},
\begin{align}
  \erf(z) &\coloneqq \frac{2}{\sqrt{\pi}}\int_0^z e^{-t^2}dt; \qquad z\in\mathbb C.
\end{align}
We can now state our result on the occupation time of the $(1/2)$-stable subordinator with drift.

\begin{theorem}\label{thm:ot_one-half-stable}
  Fix $t>0$. The occupation time $A_t^{(-\mu)}$ of the strictly stable subordinator $S^{(-\mu)}$ with index of stability $1/2$ and drift $-\mu<0$ has density, Laplace transform and double Laplace transform
\begin{align}
  x\mapsto & h(x)g(t-x)\mathbf 1_{(0, t)}(x),\\
  \Ex\left [\exp(-\lambda A_t^{(-\mu)})\right ] &=  \int_0^t e^{-\lambda u}h(u)g(t-u)du,\\
  \Ex\left [\int_0^\infty e^{-qt}\exp(-\lambda A_t^{(-\mu)})dt\right ] &= \frac{1}{q+\lambda} \left ( \frac{\sqrt b+\sqrt{q+\lambda+b}}{\sqrt b+\sqrt{q+b}} \right )^2,
\end{align}
respectively, where $b\coloneqq 1/(4\mu),$ and
\begin{align*}
	g(u) &\coloneqq (2bu+1)(\erf\sqrt{bu}-1)+2e^{-bu}\sqrt{\frac{bu}{\pi}},\\
	h(u) &\coloneqq 2\left (b(\erf\sqrt{bu}+1) +\sqrt{\frac{b}{\pi u}}e^{-bu}  \right ).
\end{align*}
\end{theorem}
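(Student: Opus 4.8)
The plan is to apply Proposition~\ref{prop:ot_levy_laplace-transform} to the L\'evy process $X=S^{(-\mu)}$, which reduces everything to two computations: the positivity $t\mapsto\Prob{S^{(-\mu)}_t>0}$, and the inversion of the resulting double Laplace transform in the variable $q$. For the first, note that $S_t=\inf\{s>0:B_s=t/\sqrt2\}$ is the first passage time of a standard Brownian motion above the level $t/\sqrt2$, so the reflection principle gives $\Prob{S_t\le s}=2\Prob{B_s\ge t/\sqrt2}=\erfc\!\big(t/(2\sqrt s)\big)$; evaluating at $s=\mu t$ and recalling $b=1/(4\mu)$ yields $\Prob{S^{(-\mu)}_t>0}=\Prob{S_t>\mu t}=1-\erfc(\sqrt{bt})=\erf(\sqrt{bt})$. (This is also where the restriction to drift $-\mu<0$ matters: for $\mu\ge0$ one has $S^{(\mu)}_s\ge0$ for Lebesgue-a.e.\ $s$, hence $A^{(\mu)}_t=t$ a.s., as already observed.)

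Substituting $\Prob{X_t>0}=\erf(\sqrt{bt})$ into~\eqref{eq:ot_levy_laplace-transform}, the exponent becomes the Frullani-type integral $\int_0^\infty e^{-qt}t^{-1}\erf(\sqrt{bt})(1-e^{-\lambda t})\,dt=J(q)-J(q+\lambda)$, where $J(q)\coloneqq\int_0^\infty t^{-1}e^{-qt}\erf(\sqrt{bt})\,dt$. Differentiating under the integral sign, $J'(q)=-\int_0^\infty e^{-qt}\erf(\sqrt{bt})\,dt=-\sqrt b/\big(q\sqrt{q+b}\big)$ by the classical Laplace transform of $t\mapsto\erf(\sqrt{bt})$; since $J(q)\to0$ as $q\to\infty$, integrating back gives $J(q)=\log\!\big((\sqrt{q+b}+\sqrt b)/(\sqrt{q+b}-\sqrt b)\big)$. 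Exponentiating and using $(\sqrt{q+b}-\sqrt b)(\sqrt{q+b}+\sqrt b)=q$ then produces the stated double Laplace transform $G(q,\lambda)=(q+\lambda)^{-1}\big((\sqrt b+\sqrt{q+\lambda+b})/(\sqrt b+\sqrt{q+b})\big)^2$.

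To recover $\Ex[\exp(-\lambda A^{(-\mu)}_t)]$, and hence the law of $A^{(-\mu)}_t$, I would invert $G(q,\lambda)$ with respect to $q\mapsto t$. The crucial observation is that $G$ factors as a product $G(q,\lambda)=\Psi(q)\,\Phi(q+\lambda)$ with $\Psi(p)=(\sqrt b+\sqrt{p+b})^{-2}$ and $\Phi(p)=(\sqrt b+\sqrt{p+b})^2/p$, and a double Laplace transform of this product form is that of a convolution. Writing $\Phi(p)=1+\widehat h(p)$ with $\widehat h(p)=2\sqrt b\,(\sqrt b+\sqrt{p+b})/p$ splits off the constant term, which inverts to a point mass at the origin (the atom $\Prob{A^{(-\mu)}_t=0}$), leaving an honest Laplace transform $\widehat h$; hence $\Ex[\exp(-\lambda A^{(-\mu)}_t)]=g(t)+\int_0^t e^{-\lambda x}h(x)g(t-x)\,dx$ with $g\coloneqq\mathcal L^{-1}[\Psi]$ and $h\coloneqq\mathcal L^{-1}[\widehat h]$. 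It then remains to invert $\Psi$ and $\widehat h$ explicitly: rationalising the radicals gives $\Psi(p)=p^{-1}+2b\,p^{-2}-2\sqrt b\,\sqrt{p+b}\,p^{-2}$ and $\widehat h(p)=2b\,p^{-1}+2\sqrt b\,\sqrt{p+b}\,p^{-1}$, and term-by-term inversion against the standard pairs $\mathcal L^{-1}[1/p]=1$, $\mathcal L^{-1}[1/p^2]=t$, $\mathcal L^{-1}[(p+b)^{-1/2}]=e^{-bt}/\sqrt{\pi t}$, $\mathcal L^{-1}[p^{-1}(p+b)^{-1/2}]=\erf(\sqrt{bt})/\sqrt b$ and their antiderivatives produces the functions $g$ and $h$ in the statement, and with them the density.

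I expect the genuinely laborious part to be this last inversion — organising the branch-point terms $\sqrt{p+b}$ and, in particular, keeping track of the constant term that produces the atom of $A^{(-\mu)}_t$ at $0$. By contrast, the Frullani integral of the second step is routine once one differentiates in $q$ and uses the known transform of $\erf(\sqrt{bt})$, and the positivity computation in the first step is classical.
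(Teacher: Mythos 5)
Your route coincides with the paper's: apply Proposition~\ref{prop:ot_levy_laplace-transform}, compute the positivity $\Prob{S^{(-\mu)}_t>0}=\erf(\sqrt{bt})$, evaluate the exponent by differentiating in $q$ against the known Laplace transform of $t\mapsto\erf(\sqrt{bt})$, and then invert $G(q,\lambda)=\Psi(q)\,\Phi(q+\lambda)$ in $q$ via the convolution theorem. Your reflection-principle computation of the positivity is correct and agrees with what the paper actually uses downstream (namely $\erf\sqrt{bt}$ with $b=1/(4\mu)$), and normalising the antiderivative by $J(q)\to0$ as $q\to\infty$ is a legitimate alternative to the paper's normalisation via $G(q,\lambda)\to q^{-1}$ as $\lambda\downarrow0$.

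The one substantive divergence is in the inversion step, and there your version is the right one. The constant term in $\Phi(p)=(\sqrt b+\sqrt{p+b})^2/p=1+\widehat h(p)$ is genuinely present: the paper identifies $(\sqrt b+\sqrt{q+\lambda+b})^2/(q+\lambda)$ with the Laplace transform of $u\mapsto e^{-\lambda u}h(u)$, which is off by exactly that $1$, and consequently the stated Laplace transform and density drop the resulting summand. Your conclusion $\Ex[\exp(-\lambda A_t^{(-\mu)})]=\tilde g(t)+\int_0^t e^{-\lambda x}h(x)\tilde g(t-x)\,dx$, with $\tilde g\coloneqq\mathcal L^{-1}[\Psi]$, is correct, and the atom $\Prob{A_t^{(-\mu)}=0}=\tilde g(t)>0$ is real: since $S_s=o(s)$ a.s.\ as $s\downarrow0$, the subordinator starts below the line $s\mapsto\mu s$ and remains below it on all of $[0,t]$ with positive probability. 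Two caveats for the write-up. First, carrying out your term-by-term inversion gives $\tilde g(u)=(2bu+1)\erfc(\sqrt{bu})-2e^{-bu}\sqrt{bu/\pi}=-g(u)$ with $g$ as defined in the statement (note $\tilde g\geq0$, $\tilde g(0)=1$, whereas $g(0)=-1$), so you do \emph{not} literally recover ``the functions in the statement'': $h$ comes out as printed, but $g$ comes out with the opposite sign, and the printed density $h(x)g(t-x)$ would be nonpositive. You should state explicitly that you are proving a corrected statement (atom $\tilde g(t)$ at $0$ plus density $h(x)\tilde g(t-x)$ on $(0,t)$) rather than asserting agreement. Second, record the consistency check $\Psi(q)\Phi(q)=1/q$, i.e.\ $\tilde g(t)+\int_0^t h(u)\tilde g(t-u)\,du=1$, which confirms the total mass at $\lambda=0$ and fails for the formula as printed.
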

  In principle the law of $A^{(-\mu)}_t$ could be characterised by computing its moments, for instance via~\eqref{eq:ot_levy_moments}. However, with the specific positivity function~\eqref{eq:subordinator_positivity_function} of the $(1/2)$-stable subordinator we were not able to work out explicitely the integral in~\eqref{eq:ot_levy_moments}. Instead, the integral formula for the double Laplace transform of $A^{(-\mu)}_t$ in Proposition~\ref{prop:ot_levy_laplace-transform} turns out to be more conducive for calculations.

\section{Proofs}\label{sec:proofs}
Before we turn to the proofs of our results, we provide an overview of the ideas and tools that we employ.

\subsection*{Preliminaries}
 One idea guiding our approach is what we call the sampling of the occupation time as introduced in~\cite{AurzadaDoeringPitters2024}. The simplest instance of this method may be phrased as follows.

\begin{proposition}[Sampling the occupation time]
 Consider a real stochastic process $(X_t)_{t\geq 0}$. The distribution of its occupation time $A_t\coloneqq\int_0^t\mathbf 1\{  X_s>0  \}ds$ may be characterised via its moments by sampling the process $(X_t)$ at times $U_1, U_2, \ldots,$ which are i.i.d.~with $U_1\sim\uniform(0, t),$ namely
\begin{align}\label{eq:sampling}
  \Ex\left [ \left ( \frac{A_t}{t}   \right )^m  \right ] = \Prob{X_{U_k}>0, 1\leq k\leq m}; \qquad m\geq 1.
\end{align}
\end{proposition}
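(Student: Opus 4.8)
The plan is to realise $A_t/t$ as a conditional probability given the whole path of $X$, and then to exploit that independent uniform sampling times are \emph{conditionally} independent given $X$; raising to the $m$-th power and taking expectations then produces the stated identity. The role of the word ``characterised'' is played by the Hausdorff moment problem: since $A_t/t$ takes values in $[0,1]$, its moment sequence determines its law, hence also the law of $A_t=t\cdot(A_t/t)$.

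First I would record two preliminary points. (a) Because $0\le A_t/t\le 1$, all moments of $A_t/t$ are finite, and by the classical determinacy of the Hausdorff moment problem (equivalently, Weierstrass approximation on $[0,1]$) the distribution of $A_t/t$ on $[0,1]$ is uniquely pinned down by $\bigl(\Ex[(A_t/t)^m]\bigr)_{m\ge 1}$. (b) In all settings of interest (in particular for a c\`adl\`ag L\'evy process) the map $(s,\omega)\mapsto X_s(\omega)$ is jointly measurable, hence so is the bounded map $(s,\omega)\mapsto\mathbf 1\{X_s(\omega)>0\}$; this is precisely what makes $A_t$ a genuine random variable and licenses the applications of Fubini--Tonelli below.

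Now let $U_1,U_2,\dots$ be i.i.d.\ with $U_1\sim\uniform(0,t)$ and independent of $X$, and write $\sigma(X)$ for the $\sigma$-algebra generated by the path. Fubini's theorem gives, for a single sample,
\begin{align*}
  \frac{A_t}{t}=\frac1t\int_0^t\mathbf 1\{X_s>0\}\,ds=\Ex\!\left[\mathbf 1\{X_{U_1}>0\}\,\middle|\,\sigma(X)\right]=\cProb{X_{U_1}>0}{\sigma(X)}.
\end{align*}
Conditionally on $\sigma(X)$ the variables $U_1,\dots,U_m$ remain i.i.d., so the conditional expectation of the product factorises; since a product of indicators is the indicator of the intersection,
\begin{align*}
  \left(\frac{A_t}{t}\right)^{m}=\prod_{k=1}^{m}\cProb{X_{U_k}>0}{\sigma(X)}=\Ex\!\left[\prod_{k=1}^{m}\mathbf 1\{X_{U_k}>0\}\,\middle|\,\sigma(X)\right]=\cProb{X_{U_k}>0,\ 1\le k\le m}{\sigma(X)}.
\end{align*}
Taking the unconditional expectation and using the tower property (all quantities bounded by $1$) yields $\Ex[(A_t/t)^m]=\Prob{X_{U_k}>0,\ 1\le k\le m}$, which is~\eqref{eq:sampling}.

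There is no substantial obstacle here: the argument is essentially a conditional Fubini computation. The only points that genuinely require attention are the joint measurability of $(s,\omega)\mapsto X_s$ (needed to integrate the indicator and to swap the order of integration), and the appeal to determinacy of the Hausdorff moment problem so that the resulting moment sequence really does characterise the law of $A_t$; both are standard and hold throughout the paper's applications.
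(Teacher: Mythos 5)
Your proof is correct and follows essentially the same route as the paper: condition on the path so that the sampled indicators become i.i.d.\ Bernoulli with success probability $A_t/t$, factor the conditional expectation, apply the tower property, and invoke determinacy of the Hausdorff moment problem since $A_t/t\in[0,1]$. Your write-up is a slightly more careful rendering of the paper's argument (conditioning explicitly on $\sigma(X)$ and noting the joint measurability needed for Fubini), but there is no substantive difference.
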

\begin{proof}
  Write $F_{A_t/t}$ for the cumulative distribution function of $A_t/t$ defined by $F_{A_t/t}(x)\coloneqq\Prob{A_t/t\leq x},$ $x\in\mathbb R$. Conditionally given $A_t=\int_0^t\mathbf 1\{  X_s>0  \}ds,$ $(\mathbf 1\{X_{U_k}>0\})_{k\geq 1}$ is a sequence of independent Bernoulli trials with success probability $A_t/t.$ Hence
  \begin{align*}
    \Prob{X_{U_k}>0, 1\leq k\leq m} = \int_0^1 p^m F_{A_t/t}(dp) = \Ex\left [\left (\frac{A_t}{t}\right )^m\right ]; \qquad m\geq 1.
  \end{align*}
Since $A_t$ is bounded, its distribution is uniquely determined by its moment sequence according to the Hausdorff's moment problem, cf.~\cite[Chapter III]{Widder1941}.
\end{proof}
If $(X_t)$ is a L\'evy process,~\cite[Theorem 1.3]{AurzadaDoeringPitters2024} shows that the sampling of the occupation time~\eqref{eq:sampling} leads to the integral formula~\eqref{eq:ot_levy_moments} with integrands that are various convolutions of functions of the form $t\mapsto t^b\Prob{X_t>0}$. Though this formula holds in full generality, it may or may not be tractable depending on the specific form of the positivity function $\Prob{X_t>0}$ at hand. In working on specific examples of positivity functions we found that considering the Laplace transform of $t\mapsto \Ex[(-\lambda A_t)]$ may be a viable alternative to characterising the distribution of $A_t$. Interestingly, it turns out that this Laplace transform may again be represented in terms of the survival probabilities $\Prob{X_{U_k}>0, 1\leq k\leq m}$. Moreover, if $(X_t)$ is a L\'evy process, these survival probabilities lead to fluctuations of random walks and Spitzer's combinatorial Lemma more specifically. As we have shown earlier, in our setting it is convenient to rephrase Spitzer's combinatorial Lemma in terms of (complete) Bell polynomials.

Let us now recall the notion of (complete and partial) Bell polynomials and some basic facts about their generating functions taken from the exposition in~\cite[Chapter 1]{Pitman2006}. For any two fixed sequences $v_\bullet\coloneqq (v_k)_{k\geq 1}$ and $w_\bullet\coloneqq (w_k)_{k\geq 1}$ define the complete Bell polynomials associated with $(v_\bullet, w_\bullet)$ by
\begin{align}
  B_k(v_\bullet, w_\bullet) \coloneqq \sum_{l=1}^k v_lB_{kl}(w_\bullet);\qquad k\in\mathbb N,
\end{align}
where
\begin{align}
  B_{kl}(w_\bullet)\coloneqq \sum_{\rho\in\mathscr P_{kl}}\prod_{B\in\rho} w_{\card B};\qquad 1\leq l\leq k,
\end{align}
denotes the $(k,l)$th partial Bell polynomial (associated with $w_\bullet$), and $\mathscr P_{kl}$ is the set of all partitions of $\{1, \ldots, k\}$ containing $l$ blocks. In addition, we write $\mathscr P_k$ for the set of all partitions of $\{1, \ldots, k\}$. Recall that for some sequence $(a_k)_{k\geq 1}$ of real numbers its exponential generating function is defined as the formal power series $\sum_{k\geq 1}a_kx^k/k!$. In what follows we work with the exponential generating function of the complete Bell polynomial which is well known to be given by
\begin{align}
  \sum_{k\geq 1} B_k(v_\bullet, w_\bullet)\frac{x^k}{k!} = v(w(x)),
\end{align}
whenever either of these quantities is well-defined, with $v, w$ the exponential generating functions of $v_\bullet, w_\bullet,$ respectively. We may now recast a variant of Spitzer's combinatorial Lemma, a cornerstone result in the fluctuation theory of random walks, in terms of Bell polynomials as follows.

\begin{proposition}[cf.~Corollary 3 in~\cite{AurzadaDoeringPitters2024}]\label{prop:spitzers_lemma_bell}
  Let $(S_n)_{n\geq 1}$ denote a random walk defined by $S_n\coloneqq\xi_1+\cdots+\xi_n,$ where $\xi_1, \xi_2, \ldots$ is a sequence of real-valued i.i.d.~random variables. The survival probability of $(S_n)$ is given by
  \begin{align}\label{eq:spitzers_lemma}
    \Prob{S_k\geq 0, 1\leq k\leq m} = \frac{1}{k!}\sum_{\rho\in\mathscr P_k} \prod_{B\in\rho} (\card B-1)!\Prob{S_{\card B}\geq 0}.
  \end{align}
  The identity in~\eqref{eq:spitzers_lemma} still holds if the weak inequalities are replaced by their strict versions.
\end{proposition}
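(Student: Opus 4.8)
The plan is to derive the identity~\eqref{eq:spitzers_lemma} from the classical generating-function form of Spitzer's combinatorial lemma (equivalently, the Sparre Andersen identity): setting $f_0\coloneqq 1$ and $f_m\coloneqq\Prob{S_j\geq 0,\ 1\leq j\leq m}$ for $m\geq 1$, one has the identity of formal power series in $t$
\begin{align*}
  \sum_{m\geq 0} f_m\, t^m = \exp\left(\sum_{n\geq 1}\frac{\Prob{S_n\geq 0}}{n}\,t^n\right).
\end{align*}
This is essentially the content of Corollary~3 in~\cite{AurzadaDoeringPitters2024}; to be self-contained one may instead recall its short proof via the cycle lemma, averaging a prescribed sign pattern of the partial sums over the $n$ cyclic shifts of $(\xi_1,\dots,\xi_n)$ and using exchangeability of the increments of the walk. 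I would take this as the sole probabilistic input; everything else is a translation into the language of complete Bell polynomials.

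For the translation, set $w_\bullet\coloneqq(w_n)_{n\geq 1}$ with $w_n\coloneqq(n-1)!\,\Prob{S_n\geq 0}$, so that $w_n/n!=\Prob{S_n\geq 0}/n$ and the right-hand side above equals $\exp(w(t))$, where $w(t)\coloneqq\sum_{n\geq1}w_nt^n/n!$ is the exponential generating function of $w_\bullet$. Writing $v_\bullet\coloneqq(1,1,\dots)$, whose exponential generating function is $v(x)=e^x-1$, we have $\exp(w(t))=1+v(w(t))$, and the generating-function formula for complete Bell polynomials recalled above gives $v(w(t))=\sum_{m\geq1}B_m(v_\bullet,w_\bullet)\,t^m/m!$. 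Comparing coefficients of $t^m$ in $\sum_{m\geq0}f_mt^m=\exp(w(t))$ therefore yields $f_m=B_m(v_\bullet,w_\bullet)/m!$ for every $m\geq 1$. Since $v_l=1$ for all $l$ and $\mathscr P_m=\bigcup_{l=1}^m\mathscr P_{ml}$ is a disjoint union,
\begin{align*}
  B_m(v_\bullet,w_\bullet)=\sum_{l=1}^m B_{ml}(w_\bullet)=\sum_{\rho\in\mathscr P_m}\prod_{B\in\rho}w_{\card B}=\sum_{\rho\in\mathscr P_m}\prod_{B\in\rho}(\card B-1)!\,\Prob{S_{\card B}\geq 0},
\end{align*}
and substituting this into $f_m=B_m(v_\bullet,w_\bullet)/m!$ gives precisely~\eqref{eq:spitzers_lemma}.

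It remains to treat the strict version. The classical identity above holds verbatim with every weak inequality $\geq$ replaced throughout by the strict $>$: this again follows from the cycle-lemma argument, which only uses the total ordering of the partial sums of a generic cyclic arrangement, or alternatively from the weak version applied to the dual (weak descending) ladder structure. Running this strict version through the same rescaling and Bell-polynomial expansion produces the strict form of~\eqref{eq:spitzers_lemma}. The manipulations here are purely formal bookkeeping and present no real difficulty; the one point that must be handled with care is to invoke the classical lemma in exactly the form needed --- with the weak-ladder survival probability $\Prob{S_j\geq 0,\ 1\leq j\leq m}$ on the left, rather than a first-passage (ladder-epoch) distribution --- and to confirm that both its weak and its strict incarnations are at our disposal.
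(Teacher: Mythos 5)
Your derivation is correct and is essentially the intended one: the paper states this proposition without proof, citing Corollary~3 of~\cite{AurzadaDoeringPitters2024}, and that corollary is precisely the recasting of the Sparre Andersen/Spitzer generating-function identity $\sum_{m\geq 0}f_mt^m=\exp\bigl(\sum_{n\geq 1}n^{-1}\Prob{S_n\geq 0}t^n\bigr)$ (valid with $\geq$ or $>$ throughout) into complete Bell polynomials by extracting the coefficient of $t^m/m!$, exactly as you do. The only cosmetic point is the index clash in the statement as printed ($m$ on the left, $k$ on the right), which your argument implicitly and correctly resolves by taking them equal.
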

Notice that the right hand side in~\eqref{eq:spitzers_lemma} is a complete Bell polynomial.

\subsection*{Proof of Theorem~\ref{thm:main}}
We are now ready to prove Proposition~\ref{prop:ot_levy_laplace-transform}, Corollary~\ref{cor:main} and Theorem~\ref{thm:main}.

\begin{proof}[Proof of Proposition~\ref{prop:ot_levy_laplace-transform}]
  For $q,\lambda> 0$ we have $G(q, \lambda)=\int_0^\infty e^{-qt}\Ex[\exp(-\lambda A_t)]\dd t$ by the Fubini-Tonelli theorem. By standard properties of the Stieltjes integral, setting $F_t(\lambda)\coloneqq \int_0^t \Ex[\exp(-\lambda A_s)] ds$ we have $G(q, \lambda)=\int_0^\infty e^{-qt}dF_t(\lambda)$. Since $F_t(\lambda)$ is non-decreasing in $t$, the Laplace transform $q\mapsto G(q, \lambda)$ uniquely determines $t\mapsto\Ex[\exp(-\lambda A_t)]$ for almost all values of $t$, cf.~\cite[Theorem 6.3]{Widder1941}.

Let us first recall a formula for the Laplace transform of the $m$th moment of $A_t$ that was used in the context of sampling of the occupation time in~\cite[Equation (14)]{AurzadaDoeringPitters2024}:
\begin{align}\label{eq:moment_lt}
  \int_0^\infty e^{-qt}\Ex[A_t^m]dt = \frac{m!}{q^{m+1}}\Prob{X_{T^{(q)}_k}>0, 1\leq k\leq m}; \qquad q>0,
\end{align}
where $0<T^{(q)}_1<T^{(q)}_2<\cdots$ denote the arrival times in a Poisson process on the half-line with intensity $q>0$ independent of $(X_t).$ Notice that for fixed $\lambda, t$ we have almost sure convergence $\sum_{k=0}^n (-\lambda A_t)^k/k!\to\exp(-\lambda A_t)$ as $n\to\infty$. By Lebesgue's dominated convergence theorem, the functions $f_n\colon [0, \infty)\to\mathbb R$ defined by $f_n(t)\coloneqq e^{-qt}\Ex[\sum_{k=0}^n (-\lambda A_t)^k/k!]$ converge pointwise to $f(t)\coloneqq e^{-qt}\Ex[\exp(-\lambda A_t)]$ as $n\to\infty$. Again applying Lebesgue's dominated convergence theorem, $\int_0^\infty f_n(t)dt\to \int_0^\infty f(t)dt$ as $n\to\infty$.

Multiplying~\eqref{eq:moment_lt} by $(-\lambda)^m/m!$ and summing over $m\geq 0,$ we obtain
\begin{align}
  G(q, \lambda) = \Ex\left [ \int_0^\infty e^{-qt}\exp(-\lambda A_t)dt\right ] = \frac 1 q \sum_{k=0}^\infty p_k(q)\left (-\frac{\lambda}{q} \right )^k,
\end{align}
where $p_0(q)\coloneqq 1$, and $p_k(q)\coloneqq \Prob{X^{(q)}_{T_1}>0, \ldots, X^{(q)}_{T_k}>0},$ $k\geq 1,$ are the persistence probabilities. By Proposition~\ref{prop:spitzers_lemma_bell},
\begin{align}
  p_k(q) &= k!^{-1}B_k\left (1^\bullet, (\bullet-1)! \Prob{X^{(q)}_{T_{\bullet}}>0}  \right ).
\end{align}
Consequently, $qG(q, \lambda)-1$ is the exponential generating function of a complete Bell polynomial. More specifically,
\begin{align}
  G(q, \lambda) &= q^{-1}\left (v\left (w\left (-\frac \lambda q \right )\right )+1\right ),
\end{align}
where $v, w$ denotes the exponential generating function of $1^\bullet$ and $(\bullet-1)! \Prob{X^{(q)}_{T_{\bullet}}>0},$ respectively. In particular, $v(\zeta)=e^\zeta-1.$ Since $T_k^{(q)}\sim\Gam(q, k),$ we find
\begin{align*}
  w(\zeta) &= \sum_{k\geq 1} (k-1)!\Prob{X_{T_k^{(q)}}>0}\frac{\zeta^k}{k!}\\
  &= \sum_{k\geq 1}\frac{\zeta^k}{k} \int_0^\infty \frac{q^k}{\Gamma(k)}t^{k-1}e^{-qt}p_t dt\\
  &=  \int_0^\infty e^{-qt}\frac{p_t}{t} \sum_{k\geq 1}\frac{(\zeta q t)^k}{k!} dt  = \int_0^\infty e^{-qt}\frac{p_t}{t} (e^{\zeta q t}-1) dt,
\end{align*}
where in the penultimate step we applied the dominated convergence theorem. Putting everything together we obtain the claim.
\end{proof}

\begin{proof}[Proof of Corollary~\ref{cor:main}]
To derive the claim, our strategy is to show that
  \begin{enumerate}
    \item (i) and (iv) are equivalent,
    \item (v) and (ii) are equivalent,
    \item (v) implies (iv),
    \item (iv) implies (ii), and
    \item (i) and (iii) are equivalent.
  \end{enumerate}
Let $\mathscr A$ denote a random variable with $\Arcsin(c)$ distribution.

1.~(i) and (iv) are equivalent. Suppose (i) holds. By~\eqref{eq:ot_levy_laplace-transform}
  \begin{align*}
    G(q, \lambda) &= q^{-1}\exp\left (-c\int_0^\infty e^{-qt}t^{-1}(1-e^{-\lambda t}) dt\right ) = \frac{1}{q(1+\lambda/q)^c}; \qquad q, \lambda>0,
  \end{align*}
where we employed the Frullani integral $\int_0^\infty e^{-qt}t^{-1}(1-e^{-\lambda t}) dt=\log(1+\lambda/q)$ in the last equality. If (iv) holds, then for any $q>0$ the function $q\mapsto G(q, \lambda)$ uniquely determines the Laplace transform of $t\mapsto t^{-1}\Prob{X_t>0}(1-e^{-\lambda t})$, which in turn uniquely determines $t\mapsto\Prob{X_t>0}$ for Lebesgue almost all $t>0,$ cf.~\cite[Theorem 6.3]{Widder1941}.

2.~(ii) and (v) are equivalent. Suppose that (v) holds, in particular $A_{E^{(q)}}/E^{(q)}=_d \mathscr A$ for any $q>0$. Then for $q, \lambda >0,$ using Fubini-Tonelli,
  \begin{align*}
   \int_0^\infty qe^{-qt}\Ex\left [\exp(-\lambda\mathscr A) \right ]dt = \Ex\left [\exp(-\lambda \mathscr A) \right ] = \Ex\left [\exp(-\lambda A_{E^{(q)}}/E^{(q)}  ) \right ] = \int_0^\infty qe^{-qt}\Ex\left [\exp(-\lambda A_t/t  ) \right ]dt,
  \end{align*}
  and by the uniqueness of the Laplace transform, cf.~\cite[Theorem 6.3]{Widder1941}, we obtain for any $\lambda>0$, first for Lebesgue almost every $t>0,$
  $$ \Ex\left [\exp(-\lambda\mathscr A) \right ] = \Ex\left [\exp(-\lambda A_t/t  ) \right ], $$
 and this identiy in fact holds for all $t>0,$ since $t\mapsto \Ex\left [\exp(-\lambda A_t/t  ) \right ]$ is a continuous function. Thus we just showed that (v) implies (ii). Suppose that (ii) holds. Then for any $E^{(q)}\sim\Exp(q)$ we have
$
   A_{E^{(q)}}/E^{(q)} =_d \mathscr A.
$
 Moreover, $E^{(q)}, A_{E^{(q)}}/E^{(q)}$ are independent, since for two integers $m, n\geq 1$ fixed arbitrarily
\begin{align*}
   \Ex \left [\left (\frac{A_{E^{(q)}}}{E^{(q)}}\right )^m (E^{(q)})^n\right ] = \Ex\left [ \Exc{\left (\frac{A_{E^{(q)}}}{E^{(q)}}\right )^m  }{E^{(q)}} (E^{(q)})^n \right ] = \Ex \left [\mathscr A^m \right ]\Ex \left [ (E^{(q)})^n\right ],
 \end{align*} 
 and the last equality follows from our assumption. Using this independence and writing $A_{E^{(q)}} = E^{(q)}A_{E^{(q)}}/E^{(q)}$, a simple moment computation shows that $A_{E^{(q)}}\sim\Gam(q, c)$, and we obtain iii).

 3.~(v) implies (iv). Assume iii). In particular, for any $q>0$ we have $A_{E^{(q)}}\sim\Gam(q, c),$ thus
 \begin{align*}
   qG(q, \lambda) = \Ex[\exp(-\lambda A_{E^{(q)}})] = \left ( \frac{q}{\lambda+q}  \right )^c; \qquad q, \lambda>0,
 \end{align*}
 and we obtain (iv).

 4.~(iv) implies (ii). Assume (iv) and fix $r>0$ arbitrarily. Setting $\varphi(t)\coloneqq rt,$ the Laplace transform of $t\mapsto \Ex[\exp(\frac 1 r A_{rt})]$ is uniquely determined by
 \begin{align*}
    \Ex\left [ \int_0^\infty e^{-qt}\exp(-\lambda\frac 1 r A_{rt})dt\right ] &= \frac 1 r\Ex\left [ \int_0^\infty e^{-\frac{q}{r}\varphi(t)}\exp(-\frac\lambda r A_{\varphi(t)}) \varphi'(t)  dt\right ],
    \intertext{where $\varphi'$ denotes the derivative of $\varphi,$ and integrating by substitution,}
  &= \frac 1 r\Ex\left [ \int_0^\infty e^{-\frac{q}{r}t}\exp(-\frac \lambda r A_{t})   dt\right ]\\
  &= \frac 1 r G\left (\frac q r, \frac  \lambda r\right ) = \frac 1 r \left (\frac{q/r}{q/r+\lambda/r}\right )^c\frac{1}{q/r} = G(q, \lambda),
 \end{align*}
 where the ultimate and penultimate equalities follow from the assumption. With the same argument as in the proof of Proposition~\ref{prop:ot_levy_laplace-transform} we obtain for any $r, t>0$ that $A_{rt}/r=_d A_t$. Choosing $t=1$ shows (ii).

 5.~Clearly, (i) implies (iii). Suppose (iii) holds. By the uniqueness theorem for Laplace transform, cf.~\cite[Theorem 6.3]{Widder1941}, $\Prob{X_t>0}=c$ for Lebesgue almost all $t>0.$ It is well-known that $t\mapsto\Prob{X_t>0}$ is either left continuous or right continuous on $[0, \infty),$ cf.~\cite[Lemma 2.6]{GetoorSharpe1994}, and therefore (i) holds.
\end{proof}

% The last integral Notice that for $q, \lambda$ such that $\Re(q+\lambda)>0$ the integral $\int_0^\infty dt e^{-qt}(1-e^{-\lambda t})p_t/t$ exists, since $\abs{e^{-(q+\lambda)t}p_t/t}\leq e^{-\Re (q+\lambda)t}$ is integrable iff $\Re (q+\lambda)>0,$

To derive our main result we make use of Campbell's Theorem. Let us restate Campbell's Theorem for the reader's convenience without proof following~\cite[Section 3.2]{Kingman1993}.

\begin{theorem}[Campbell's Theorem]\label{thm:campbell}
  Let $\Pi$ be a Poisson process on $S$ with mean measure $\mu$, and let $f\colon S\to\mathbb R$ be measurable. Then the sum
  \begin{align}
    \Sigma\coloneqq \sum_{X\in \Pi}f(X)
  \end{align}
  is absolutely convergent with probability if and only if
  \begin{align}
    \int_S \min(\abs{f(x)}, 1)\mu(dx)<\infty.
  \end{align}
If this condition holds, then
\begin{align}\label{eq:exponential_functional}
  \Ex[\exp(\theta \Sigma)] &= \exp\left ( \int_S (e^{\theta f(x)}-1)  \mu(dx)  \right )
\end{align}
for any complex $\theta$ for which the integral on the right converges, and in particular when $\theta$ is pure imaginary. Moreover\begin{align}\label{eq:campbell_mean}
  \Ex[\Sigma] = \int_S f(x)\mu(dx)
\end{align}
in the sense that the expectation exists if and only if the integral converges and they are then equal. If~\eqref{eq:campbell_mean} converges, then
\begin{align}
  \Var(\Sigma) &= \int_S f(x)^2\mu(dx),
\end{align}
finite or infinite.
\end{theorem}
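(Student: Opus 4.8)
The plan is to prove the theorem by reducing everything to the defining property of a Poisson process: for disjoint measurable sets $A_1,\dots,A_n$ the counts $N(A_i)\coloneqq\#(\Pi\cap A_i)$ are independent with $N(A_i)\sim\Pn(\mu(A_i))$, so $\Ex[z^{N(A)}]=\exp(\mu(A)(z-1))$ and $\Ex[N(A)]=\Var(N(A))=\mu(A)$. First I would check the exponential formula~\eqref{eq:exponential_functional} for a simple function $f=\sum_{i=1}^n a_i\mathbf 1_{A_i}$ with the $A_i$ disjoint and $\mu(A_i)<\infty$: here $\Sigma=\sum_i a_iN(A_i)$ and independence immediately gives $\Ex[e^{\theta\Sigma}]=\prod_i\exp(\mu(A_i)(e^{\theta a_i}-1))=\exp(\int_S(e^{\theta f}-1)\mu(dx))$ for every $\theta\in\mathbb C$, while $\Ex[\Sigma]=\sum_i a_i\mu(A_i)=\int_S f\,d\mu$ and $\Var(\Sigma)=\sum_i a_i^2\mu(A_i)=\int_S f^2\,d\mu$.

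Next I would pass to a nonnegative measurable $f$ by choosing simple $0\le f_1\le f_2\le\cdots\uparrow f$, so $\Sigma_m\coloneqq\sum_{X\in\Pi}f_m(X)$ increases to $\Sigma$. Taking $\theta=-s<0$, bounded convergence (the $e^{-s\Sigma_m}$ decrease to $e^{-s\Sigma}$, bounded by $1$) together with monotone convergence on the right gives $\Ex[e^{-s\Sigma}]=\exp(-\int_S(1-e^{-sf})\,d\mu)$. Since $\tfrac12\min(1,su)\le 1-e^{-su}\le\min(1,su)$, this integral is finite exactly when $\int_S\min(1,f)\,d\mu<\infty$, and because $\Sigma<\infty$ a.s.\ is equivalent to $\Ex[e^{-s\Sigma}]>0$ this is precisely the convergence criterion in the nonnegative case; monotone convergence along the $f_m$ likewise yields $\Ex[\Sigma]=\int_S f\,d\mu$ and $\Var(\Sigma)=\int_S f^2\,d\mu$. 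For general $f$ I would split $f=f^+-f^-$ and set $\Sigma^\pm\coloneqq\sum_{X\in\Pi}f^\pm(X)$; since $\{f>0\}$ and $\{f<0\}$ are disjoint, the restrictions of $\Pi$ to these sets are independent Poisson processes, so $\Sigma^+$ and $\Sigma^-$ are independent. Absolute convergence of $\Sigma$ is the a.s.\ finiteness of $\Sigma^++\Sigma^-$, hence of both, hence (nonnegative case) the finiteness of $\int\min(1,f^+)d\mu$ and $\int\min(1,f^-)d\mu$, which by $\min(1,|f|)\le\min(1,f^+)+\min(1,f^-)\le2\min(1,|f|)$ is equivalent to $\int_S\min(|f|,1)\,d\mu<\infty$.

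To obtain~\eqref{eq:exponential_functional} for general $f$ I would assume the right-hand integral converges absolutely (which forces the convergence criterion) and use independence to write $\Ex[e^{\theta\Sigma}]=\Ex[e^{\theta\Sigma^+}]\,\Ex[e^{-\theta\Sigma^-}]$. For $\theta>0$ the second factor is the nonnegative case already done and equals $\exp(\int_{\{f<0\}}(e^{\theta f}-1)d\mu)$, while for the first factor I would approximate $f^+$ from below by simple functions and use \emph{monotone} convergence of the exponential functional to get $\Ex[e^{\theta\Sigma^+}]=\exp(\int(e^{\theta f^+}-1)d\mu)$, finite precisely because the full integral converges; multiplying gives the formula, and $\theta<0$ is symmetric. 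For pure imaginary $\theta=iu$ I would exhaust $S$ by sets $B_n\uparrow S$ of finite $\mu$-measure ($\sigma$-finiteness of $\mu$ ensures these exist), so $\Pi\cap B_n$ is a.s.\ finite and $\Sigma^{(n)}\coloneqq\sum_{X\in\Pi\cap B_n}f(X)\to\Sigma$ a.s.\ by absolute convergence, hence $e^{iu\Sigma^{(n)}}\to e^{iu\Sigma}$ boundedly; approximating each $f\mathbf 1_{B_n}$ by simple functions bounded by $|f|$, applying the simple case, and passing to the limit via bounded and dominated convergence (using $|e^{iuf}-1|\le2\min(1,|u||f|)$) yields $\Ex[e^{iu\Sigma}]=\exp(\int_S(e^{iuf}-1)d\mu)$. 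Finally $\Ex[\Sigma]=\Ex[\Sigma^+]-\Ex[\Sigma^-]=\int f\,d\mu$ and, by independence, $\Var(\Sigma)=\Var(\Sigma^+)+\Var(\Sigma^-)=\int(f^+)^2d\mu+\int(f^-)^2d\mu=\int f^2\,d\mu$, in the stated sense that each holds iff the integral converges.

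The hard part will be the passage to the limit in~\eqref{eq:exponential_functional} for real $\theta$: since $\Sigma$ need not be bounded, $e^{\theta\Sigma}$ is not dominated and one cannot pass to the limit in a crude approximation. The resolution is that the only genuinely dangerous piece is $\Sigma^+$ (nonnegative integrand, positive exponent), and there the exponential functional is \emph{monotone} along an increasing sequence of simple functions, so monotone convergence applies cleanly; the complementary piece $\Sigma^-$ and the pure-imaginary case are then routine bounded/dominated convergence. A minor but essential ingredient is the two-sided comparison $1-e^{-su}\asymp\min(1,su)$, which is what turns the analytic statement ``$\int_S(1-e^{-sf})\,d\mu<\infty$'' into the geometric criterion $\int_S\min(|f|,1)\,d\mu<\infty$ of the theorem.
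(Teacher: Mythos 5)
The paper does not actually prove this statement: it restates Campbell's Theorem explicitly ``without proof,'' citing Kingman (1993), Section 3.2, so there is no in-paper argument to compare against; your proposal is essentially the standard proof from that source (exact computation for simple functions via the Poisson generating function, monotone passage to nonnegative $f$, then the split $f=f^+-f^-$ with independence of the restricted processes) and it is sound. The one step to tighten is the claim that ``$\Sigma<\infty$ a.s.\ is equivalent to $\Ex[e^{-s\Sigma}]>0$'': positivity of the Laplace transform only yields $\Prob{\Sigma<\infty}>0$, so to get the almost-sure dichotomy you should let $s\downarrow 0$ in the identity $\Ex[e^{-s\Sigma}]=\exp\left(-\int_S(1-e^{-sf})\,\mu(dx)\right)$ and use dominated convergence (with dominating function $\min(1,s_0 f)$) to conclude $\Prob{\Sigma<\infty}=1$ exactly when $\int_S\min(f,1)\,\mu(dx)<\infty$.
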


\begin{proof}[Proof of Theorem~\ref{thm:main}]
Using Proposition~\ref{prop:ot_levy_laplace-transform} we find
  \begin{align}\label{eq:exponential_functional}
    \Ex[\exp(-\lambda A_{E^{(q)}})] &= \Ex\left [\int_0^\infty \exp(-\lambda A_t) qe^{-qt} dt \right ]  \\\notag
      &= \exp\left (-\int_0^\infty e^{-qt}t^{-1} \Prob{X_t>0} (1-e^{-\lambda t})  dt \right ) \\\notag
      &= \exp\left (-\int_0^\infty\int_0^\infty  (1-e^{-\lambda t}) \mu(dt) \right ),
  \end{align}
  where $\mu$ is a finite measure on the positive half-line with density $e^{-qt}t^{-1}\Prob{X_t>0}$. Define a Poisson process $\Pi$ on the positive half-line with mean measure $\mu.$ Since $\int_{[0, \infty)} \min(t, 1)\mu(dt)=\int_0^1 e^{-qt}\Prob{X_t>0}dt+\int_1^\infty t^{-1}e^{-qt}\Prob{X_t>0}dt<\infty$ we obtain from Campbell's Theorem, Theorem~\ref{thm:campbell}, for the special case $f(x)\coloneqq x,$
  \begin{align*}
    \Ex\left [\exp\left (-\lambda\sum_{T\in \Pi} T\right ) \right ] = \exp\left ( \int_0^\infty\int_0^\infty (e^{-\lambda t}-1)\mu(dt)   \right).
  \end{align*}
Comparing this formula with the expressions in~\eqref{eq:exponential_functional} we obtain~\eqref{eq:main_identity}. Campbell's theorem now yields~\eqref{eq:main_mean_variance}.

Assume now that $X$ has constant positivy $\Prob{X_t>0}=c$ for $t>0$. Since $\bar\Pi$ is the superposition of the two independent Poisson processes of its black and white points, $\{  T\in\Pi: T\text{ is black}\}$ and $\{  T\in\Pi: T\text{ is white}\}$, the $\Gamma, \Gamma'$ are independent. By Corollary~\ref{cor:main}, (v) we have $\Gamma\sim\Gam(q, c)$ and $\Gamma'\sim\Gam(q, 1-c)$. By Corollary~\ref{cor:main},
  \begin{align}
    \frac{A_t}{t} =_d \frac{A_{E^{(q)}}}{E^{(q)}} =_d \frac{\sum_{T\in \bar\Pi, T \text{ is black}} T}{\sum_{T\in \bar\Pi} T} = \frac{\Gamma}{\Gamma+\Gamma'}\sim\Arcsin(c),
  \end{align}
  where the last identity follows from a well-known representation of the beta distribution.
\end{proof}

%Let us now \underline{assume} that the formula of Theorem~\ref{thm:ot_levy_laplace-transform} is valid for any complex $q\neq 0$ and $\lambda.$

\subsection*{Proof of Theorem~\ref{thm:ot_one-half-stable}}
We now turn to the proof of Theorem~\ref{thm:ot_one-half-stable}.

\begin{proof}[Proof of Theorem~\ref{thm:ot_one-half-stable}]

Recall (cf.~\cite[Example 6.3]{Janson2022}) that the density and distribution function of the marginal $S_t$ (supported on $(0, \infty)$ and) is given by
\begin{align*}
  x & \mapsto \frac{t}{\sqrt{2\pi x^3}} e^{-t^2/(4x)}\mathbf 1\{ x>0 \},\\
  \intertext{and}
\Prob{S_t\leq x} &= \begin{cases}
  2\left (1-\Phi\left (\frac{t}{\sqrt{x}}\right ) \right ) & \text{if }x> 0,  \\
  0 & \text{if }x\leq 0,
\end{cases}
\end{align*}
respectively, where $\Phi(x)\coloneqq \frac{1}{\sqrt{2\pi}}\int_{-\infty}^x e^{-y^2/2}dy$ denotes the distribution function of the standard Gaussian distribution. Fix $\mu> 0$ and consider the $(1/2)$-stable subordinator with negative drift, $(S_t^{(-\mu)})_{t\geq 0},$ defined by $S^{(-\mu)}_t\coloneqq S_t-\mu t.$ In what follows we work with the error function instead of $\Phi,$ as this turns out to be more convenient. Recall that the error function is defined as, cf.~\cite[7.2.1]{OlverLozierBoisvertClark2010},
\begin{align}
  \erf(z) &\coloneqq \frac{2}{\sqrt{\pi}}\int_0^z e^{-t^2}dt; \qquad z\in\mathbb C.
\end{align}
Thus for $x>0,$ using $\Phi(0)=1/2$ and integration by substitution
\begin{align*}
  2\Phi(x)-1 = 2(\Phi(x)-\Phi(0))   = \sqrt{\frac{2}{\pi}}\int_0^x e^{-y^2/2}dy = \frac{2}{\sqrt\pi} \int_0^{x/\sqrt 2} e^{-y^2}dy=\erf\left (\frac{x}{\sqrt 2}\right ).
\end{align*}

For $x>-\mu t$ we have  $\Prob{S^{(-\mu)}_t\leq x}=\Prob{S_t-\mu t\leq x} = 2(1-\Phi(t/\sqrt{x+\mu t})),$ thus the positivity function is given by
\begin{align}\label{eq:subordinator_positivity_function}
    p^{(-\mu)}_t  \coloneqq \Prob{S^{(-\mu)}_t>0} = 2\Phi\left ( \sqrt\frac{t}{\mu}\right )-1  = \erf\left (\sqrt\frac{t}{\mu} \right ).
\end{align}

Next, we apply our characterization of the occupation time for L\'evy processes, Proposition~\ref{prop:ot_levy_laplace-transform}, to compute the double Laplace transform of $A^{(-\mu)},$
  \begin{align}\label{eq:double_lt_stable}
    G(q, \lambda)=\Ex\left [\int_0^\infty e^{-qt}\exp(-\lambda A_t^{(-\mu)})dt\right ] &= q^{-1}\exp\left (-\int_0^\infty e^{-qt}\frac{p_t^{(-\mu)}}{t} (1-e^{-\lambda t}) dt\right ),
  \end{align}
for $q, \lambda>0,$  more explicitely. Notice that for $q, \lambda$ such that $\Re(q+\lambda)>0$ the integral $\int_0^\infty dt e^{-qt}(1-e^{-\lambda t})p_t/t$ exists, since $\abs{e^{-(q+\lambda)t}p_t/t}\leq e^{-\Re (q+\lambda)t}$ is integrable iff $\Re (q+\lambda)>0$. By our assumption $q, \lambda>0$ this condition is met. Writing $b\coloneqq 1/(4\mu),$ 
\begin{align*}
  & -\frac{d}{dq} \int_0^\infty \frac{p_t^{(-\mu)}}{t}e^{-qt}(1-e^{-\lambda t})dt\\
    &= \int_0^\infty p_t^{(-\mu)}e^{-qt}(1-e^{-\lambda t})dt\\
    &= \int_0^\infty p_t^{(-\mu)}e^{-qt}dt-\int_0^\infty p_t^{(-\mu)}e^{-(q+\lambda)t}dt\\
    &= \int_0^\infty e^{-qt}\erf\sqrt{bt}dt-\int_0^\infty e^{-(q+\lambda)t}\erf\sqrt{bt} dt\\
    &= \frac 1 q\sqrt\frac{b}{q+b}-\frac{1}{q+\lambda}\sqrt\frac{b}{q+\lambda+b},
\end{align*}
where in the penultimate step we apply $\int_0^\infty e^{-at}\erf\sqrt{bt}dt=\sqrt{b/(a+b)}/a$ for $a, b>0$, cf.~\cite[Equation (7.14.3)]{OlverLozierBoisvertClark2010}.
Notice that for any $\lambda>0$ and $l\geq 0$ a primitive function of $q\mapsto -\frac{1}{q+l}\sqrt\frac{b}{q+l+b}$ defined on $(0, \infty)$ is given by
\begin{align*}
  q\mapsto \log \left ( \frac{(\sqrt b+\sqrt{q+l+b})^2}{q+l} \right ),
\end{align*}
since
\begin{align*}
  \frac{d}{dq}\log\frac{(\sqrt b+\sqrt{q+l+b})^2}{q+\lambda} &= \frac{1}{\sqrt b+\sqrt{q+l+b}}\left (\frac{1}{\sqrt{q+l+b}}-\frac{\sqrt b+\sqrt{q+l+b}}{q+l}\right )\\
  &= \frac{\sqrt b-\sqrt{q+l+b}}{-(q+l)}\left ( \frac{\sqrt{q+l+b}}{q+l+b}-\frac{\sqrt b+\sqrt{q+l+b}}{q+l}   \right)\\
  &= \frac{(\sqrt b+\sqrt{q+l+b})(\sqrt b-\sqrt{q+l+b})}{(q+l)^2} - \frac{\sqrt{q+l+b}(\sqrt b-\sqrt{q+l+b})}{(q+\lambda)(q+\lambda+b)}\\
  &= \frac{-q-l}{(q+l)^2}-\frac{\sqrt b-\sqrt{q+l+b}}{(q+l)\sqrt{q+l+b}}\\
  &= -\frac{1}{q+l}\sqrt{\frac{b}{q+l+b}}.
\end{align*}

Applying this observation twice, namely for $l=0$ and $l=\lambda,$ we have
\begin{align*}
  -\int_0^\infty \frac{p_t^{(-\mu)}}{t}e^{-qt}(1-e^{-\lambda t})dt &= \log\left (  \frac{q}{(\sqrt b+\sqrt{q+b})^2} \cdot \frac{(\sqrt b+\sqrt{q+\lambda+b})^2}{q+\lambda} \right)+C,
\end{align*}
for some suitable real constant $C$. Since for any L\'evy process we have $G(q, \lambda)\to q^{-1}$  as $\lambda\downarrow 0$ from the definition of the double Laplace transform~\eqref{eq:ot_levy_laplace-transform}, we conclude $C=0$. Plugging into equation~\eqref{eq:double_lt_stable} yields
\begin{align}
  G(q, \lambda) &= \frac{1}{q+\lambda} \left ( \frac{\sqrt b+\sqrt{q+\lambda+b}}{\sqrt b+\sqrt{q+b}} \right )^2.
\end{align}

For the time being fix $\lambda, b$. For $u>0$ define the functions
\begin{align*}
	f(u) &\coloneqq  2e^{-\lambda u} \left (b(\erf\sqrt{bu}+1) +\sqrt{\frac{b}{\pi u}}e^{-bu}  \right ) ,\\
	g(u) &\coloneqq (2bu+1)(\erf\sqrt{bu}-1)+2e^{-bu}\sqrt{\frac{bu}{\pi}},
\end{align*}
Since $f, g$ are continuously differentiable, they are of bounded variation, cf.~\cite[Theorem 1.9 in Chapter 6]{Muresan2009}, and thus their respective Laplace transforms
\begin{align*}
	\bar f(q) &= \frac{(\sqrt b+\sqrt{q+\lambda+b})^2}{q+\lambda},\quad \bar g(q) =(\sqrt b+\sqrt{q+b})^{-2};\qquad q, \lambda>0,
\end{align*}
are uniquely determined Lebesgue almost everywhere, cf.~\cite[Theorem 6.3]{Widder1941}. Since $G=\bar f \bar g,$  $q\mapsto G(q, \lambda)$ is the Laplace transform of $f*g.$ Setting $h(u)\coloneqq 2\left (b(\erf\sqrt{bu}+1) +\sqrt{\frac{b}{\pi u}}e^{-bu}  \right )$ we find for the Laplace transform of $A_t^{(-\mu)}$:
\begin{align*}
  \Ex\left [\exp(-\lambda A_t^{(-\mu)})\right ] &=  (f*g)(t)=\int_0^t f(u)g(t-u)du = \int_0^t e^{-\lambda u}h(u)g(t-u)du.
\end{align*}
Consequently, the distribution of $A_t^{(-\mu)}$ has density
\begin{align*}
  x\mapsto h(x)g(t-x)\mathbf 1_{(0, t)}(x),
\end{align*}
and we conclude the claim.
\end{proof}

\bibliographystyle{amsplain}
\bibliography{literature}

\providecommand{\bysame}{\leavevmode\hbox to3em{\hrulefill}\thinspace}
\providecommand{\MR}{\relax\ifhmode\unskip\space\fi MR }
% \MRhref is called by the amsart/book/proc definition of \MR.
\providecommand{\MRhref}[2]{%
  \href{http://www.ams.org/mathscinet-getitem?mr=#1}{#2}
}
\providecommand{\href}[2]{#2}
\begin{thebibliography}{10}

\bibitem{Akahori1995}
J.~Akahori, \emph{Some formulae for a new type of path-dependent option}, Ann.
  Appl. Probab. \textbf{5} (1995), no.~2, 383--388 (English).

\bibitem{AurzadaDoeringPitters2024}
F.~Aurzada, L.~D{\"o}ring, and H.~H. Pitters, \emph{Occupation times and areas
  derived from random sampling}, Preprint, {arXiv}:2406.09886 [math.{PR}]
  (2024), 2024.

\bibitem{EmbrechtsRogersYor1995}
P.~Embrechts, L.~C.~G. Rogers, and M.~Yor, \emph{A proof of {Dassios}'
  representation of the {{\(\alpha\)}}-quantile of {Brownian} motion with
  drift}, Ann. Appl. Probab. \textbf{5} (1995), no.~3, 757--767 (English).

\bibitem{FitzsimmonsGetoor1995}
P.~J. Fitzsimmons and R.~K. Getoor, \emph{Occupation time distributions for
  {L}\'{e}vy bridges and excursions}, Stochastic Process. Appl. \textbf{58}
  (1995), no.~1, 73--89. \MR{1341555}

\bibitem{GetoorSharpe1994}
R.~K. Getoor and M.~J. Sharpe, \emph{On the arc-sine laws for {L}\'{e}vy
  processes}, J. Appl. Probab. \textbf{31} (1994), no.~1, 76--89. \MR{1260572}

\bibitem{Janson2022}
S.~Janson, \emph{Stable distributions}, Preprint, {arXiv}:1112.0220 [math.{PR}]
  (2011), 2011.

\bibitem{Kac1951}
M.~Kac, \emph{On some connections between probability theory and differential
  and integral equations}, Proceedings of the {S}econd {B}erkeley {S}ymposium
  on {M}athematical {S}tatistics and {P}robability, 1950, Univ. California
  Press, Berkeley-Los Angeles, Calif., 1951, pp.~189--215. \MR{45333}

\bibitem{KeilsonWellner1978}
J.~Keilson and J.~A. Wellner, \emph{Oscillating {Brownian} motion}, J. Appl.
  Probab. \textbf{15} (1978), 300--310 (English).

\bibitem{Kingman1993}
J.~F.~C. Kingman, \emph{Poisson processes}, Oxf. Stud. Probab., vol.~3, Oxford:
  Clarendon Press, 1993 (English).

\bibitem{Knight1996}
F.~B. Knight, \emph{The uniform law for exchangeable and {L}\'{e}vy process
  bridges}, Ast\'{e}risque (1996), no.~236, 171--188, Hommage \`a P. A. Meyer
  et J. Neveu. \MR{1417982}

\bibitem{KyprianouPardo2022}
A.~E. Kyprianou and J.~C. Pardo, \emph{Stable {L{\'e}vy} processes via
  {Lamperti}-type representations}, Inst. Math. Stat. Monogr., vol.~7,
  Cambridge: Cambridge University Press, 2022 (English).

\bibitem{Lamperti1958}
J.~Lamperti, \emph{An occupation time theorem for a class of stochastic
  processes}, Trans. Am. Math. Soc. \textbf{88} (1958), 380--387 (English).

\bibitem{Levy1939}
P.~L\'{e}vy, \emph{Sur certains processus stochastiques homog\`enes},
  Compositio Math. \textbf{7} (1939), 283--339. \MR{919}

\bibitem{MoertersPeres2010}
P.~M\"{o}rters and Y.~Peres, \emph{Brownian motion}, Cambridge Series in
  Statistical and Probabilistic Mathematics, vol.~30, Cambridge University
  Press, Cambridge, 2010, With an appendix by Oded Schramm and Wendelin Werner.
  \MR{2604525}

\bibitem{Muresan2009}
M.~Mure{\c{s}}an, \emph{A concrete approach to classical analysis}, CMS Books
  Math./Ouvrages Math. SMC, New York, NY: Springer, 2009 (English).

\bibitem{OlverLozierBoisvertClark2010}
F.~W.~J. Olver, D.~W. Lozier, R.~F. Boisvert, and C.~W. Clark (eds.),
  \emph{{NIST} handbook of mathematical functions}, Cambridge: Cambridge
  University Press, 2010 (English).

\bibitem{Pitman2006}
J.~Pitman, \emph{Combinatorial stochastic processes}, Lecture Notes in
  Mathematics, vol. 1875, Springer-Verlag, Berlin, 2006, Lectures from the 32nd
  Summer School on Probability Theory held in Saint-Flour, July 7--24, 2002,
  With a foreword by Jean Picard. \MR{2245368}

\bibitem{PitmanYor1992}
J.~Pitman and M.~Yor, \emph{Arcsine laws and interval partitions derived from a
  stable subordinator}, Proc. Lond. Math. Soc. (3) \textbf{65} (1992), no.~2,
  326--356 (English).

\bibitem{PitmanYor2007}
\bysame, \emph{It{\^o}'s excursion theory and its applications}, Jpn. J. Math.
  (3) \textbf{2} (2007), no.~1, 83--96 (English).

\bibitem{Pitters2025}
H.~H. Pitters, \emph{Levy's second arcsine law via the ballot theorem},
  Preprint, {arXiv}:2505.12374 [math.{PR}] (2025), 2025.

\bibitem{SalminenStenlund2021}
P.~Salminen and D.~Stenlund, \emph{On occupation times of one-dimensional
  diffusions}, J. Theor. Probab. \textbf{34} (2021), no.~2, 975--1011
  (English).

\bibitem{SamarodnitskyTaqqu1994}
G.~Samorodnitsky and M.~S. Taqqu, \emph{Stable non-{Gaussian} random processes:
  stochastic models with infinite variance}, New York, NY: Chapman \& Hall,
  1994 (English).

\bibitem{Widder1941}
D.~V. Widder, \emph{The {Laplace} transform}, Princeton Math. Ser., vol.~6,
  Princeton University Press, Princeton, NJ, 1941 (English).

\end{thebibliography}

\begin{affil}
{ }Mathematics Institute, University of Mannheim, 68131 Mannheim, Germany\\
Email: helmut dot pitters at gmail dot com
\end{affil}

\end{document}